\newtheorem{theorem}{Theorem}[section]
\newtheorem{prop}{Proposition}[section]
\newtheorem{lemma}{Lemma}[section]
\numberwithin{equation}{section}
\title{A direct sampling method for magnetic induction tomography}
\author{Junqing Chen\thanks{\footnotesize
Department of Mathematical Sciences, Tsinghua University, Beijing
100084, China. The work of this author was partially supported by the National Natural Science Foundation of China under the grant 92370125. (jqchen@tsinghua.edu.cn).}
\and Chengzhe Jiang\thanks{\footnotesize
Department of Mathematical Sciences, Tsinghua University, Beijing
100084, China. (jiangcz24@mails.tsinghua.edu.cn)}}
\date{} 
\begin{document}

\maketitle

\begin{abstract}
This paper proposes a direct sampling method for the inverse problem of magnetic induction tomography (MIT). Our approach defines a class of point spread functions with explicit expressions, which are computed via inner products, leading to a simple and fast imaging process. We then prove that these point spread functions decay with distance, establishing the theoretical basis of the algorithm. Specific expressions for special cases are also derived to visually demonstrate their attenuation pattern. Numerical experimental results further confirm the efficiency and accuracy of the proposed algorithm.
\end{abstract}
{{\bf Mathematics Subject Classification}(MSC2020): 78A46, 65N21, 35R30}\\
{{\bf Keywords:} magnetic induction tomography, eddy current, direct sampling method }

\section{Introduction}

Magnetic induction tomography (MIT), which is also known as electromagnetic induction tomography or eddy current tomography in different fields, is a promising, non-invasive imaging modality that reconstructs an object’s internal conductivity distribution through electromagnetic induction. Its key advantages, such as non-contact operation and the absence of electrodes, make it particularly suitable for monitoring the pathological changes in biological tissues \cite{MIT1, MIT3, MIT4, Merwa_2005} and nondestructive testing in industrial tomography \cite{PEYTON202277}. The underlying physical process involves drive coils generating an oscillating primary magnetic field, which induces eddy currents within the conductive regions. These eddy currents, in turn, produce a secondary magnetic field that contains information about the conductivity distribution. The mathematical model for MIT is fundamentally based on the eddy current model. In the low-frequency range, for instance below 100MHz,  the displacement current term in Maxwell’s equations can be neglected \cite{Eddy2010}, in which case the eddy current equation serves as a robust and efficient approximation of the full Maxwell’s equations.

The inverse problem of MIT is ill-posed since there is usually no uniqueness and stability of the solution. Currently, the main way to solve this problem is iterative methods, in which iterations are employed to minimize the $L^2$-misfit between the reconstruction outcome and the experiment data. They also use a regularization term to obtain the appropriate regularity of the conductivity distribution. Various iterative optimization methods have been applied, such as the quasi-Newton method, the split Bregman method, etc., and the regularization terms can be Tikhonov regularization, Laplace regularization, and TV regularization \cite{Chen2024, MIT_R2, Wolff_2012, Li_2017, Merwa_2005}. These methods usually require multiple iterations and multiple solutions for the forward problem, resulting in significant time consumption. 

To overcome the huge computational burden of iterative methods for inverse problems, researchers have recently proposed various direct sampling methods, including the linear sampling method \cite{LSM3}, the factorization method \cite{Fac1}, and the reverse time migration method \cite{RTM1, RTM2}, to name a few. Direct sampling methods do not require prior knowledge of the inclusions and have solid theoretical foundations and reliable numerical outcomes. Usually a set of indicators is proposed, taking large values within the target area and small values outside. These direct methods have been successfully applied to other inverse problems, including mainly electromagnetic scattering problems \cite{Scatter1, Scatter2}, acoustic wave problems \cite{RTM1}, and elastic wave problems \cite{RTM3}. Even for the strongly ill-posed electrical impedance tomography (EIT) problem, a specifically designed direct method has been proposed in \cite{Chow2014}. However, the results of direct sampling methods for solving the MIT problem are very limited. For small-volume conductive bodies, an asymptotic expansion of the magnetic field perturbation in the eddy current model has been derived in \cite{MUSIC2}, and based on the expansion, a MUSIC-type algorithm is also constructed to locate the position of the conductors. In addition, a linear sampling method has been proposed for the axisymmetric eddy current model in \cite{LSM1}. The article measures the electric field data within a linear region along the axis, and defines an index function at each point within the sampling area to indicate the position of the conductor. These algorithms are only applicable to specific cases of the eddy current model, and currently, there is no direct sampling method capable of addressing the general cases of MIT. Therefore, it is necessary to propose a new and more universal direct sampling method for the MIT problem.

In this paper, we introduce a direct sampling method for the MIT inverse problem, which is based on a newly designed class of point spread functions. We theoretically demonstrate that these functions decay to zero in regions distant from conductive inclusions, and derive their explicit estimates for several specific cases (see propositions \ref{prop:decay} and \ref{prop:decay2} in Section \ref{sect3}). A key advantage of the proposed method is that the final index function $I$ is defined solely through the inner products, making the imaging process straightforward and computationally efficient. Furthermore, most of the computational workload can be performed offline, which substantially reduces the online reconstruction time and enables rapid imaging.

Our paper is organized as follows. Section \ref{sect2} formulates the MIT problem and its corresponding eddy current model. It also introduces the integral representation formula for the scattered magnetic field, which forms the basis of our direct sampling method. In Section \ref{sect3}, we define the duality product on the measurement surface $\Gamma$ and propose a novel class of point spread functions, followed by a proof of their attenuation property. The complete direct sampling algorithm is then presented. In Section \ref{sect4} presents some numerical examples to validate the effectiveness of our proposed method. Finally, we present some concluding remarks of this study in Section \ref{sect5}.

\section{Eddy current model}\label{sect2}
In this section, we first summarize the eddy current model for magnetic induction tomography and then present the basic setup for the corresponding inverse problem.
Let $\mu$ be the magnetic permeability which is assumed to be constant in $\mathbb{R}^3$. Let $\sigma\in L^\infty (\mathbb R^3)$ be the electric conductivity with compact support $D\subset\mathbb{R}^3$, where $D$ has Lipschitz boundary. We further assume that in $D$ the conductivity $\sigma$ is uniformly positive, i.e., $\sigma\ge\sigma_0>0$ almost everywhere in $D$. Then the eddy current fields $(\boldsymbol E,\boldsymbol H)$ satisfy the following equations:
\begin{equation}\label{eq:eddy-E&H}
    \begin{cases}
    \nabla\times\boldsymbol E=i\omega\mu\boldsymbol H & \text{in}\,\,\mathbb{R}^3, \\
    \nabla\times\boldsymbol H=\sigma\boldsymbol E + \boldsymbol J_0 & \text{in}\,\,\mathbb{R}^3, \\
    \boldsymbol E(\boldsymbol{x})=O(|\boldsymbol{x}|^{-1}),\,
    \boldsymbol H(\boldsymbol{x})=O(|\boldsymbol{x}|^{-1}) & \text{as}\,\,|\boldsymbol x|\to\infty.
    \end{cases}
\end{equation}
Here $\boldsymbol{J}_0$ is the source current located outside $D$. Moreover, we assume $\boldsymbol{J}_0$ is divergence free and has compact support. By eliminating $\boldsymbol H$ in \eqref{eq:eddy-E&H}, we obtain
\begin{equation}\label{eddy-E}
    \begin{cases}
    \nabla\times(\mu^{-1}\nabla\times\boldsymbol{E})-i\omega\sigma\boldsymbol{E}=i\omega\boldsymbol{J}_0 &\text{in}\,\,\mathbb{R}^3,\\
    \nabla\cdot\boldsymbol{E}=0 &\text{in}\,\,\mathbb{R}^3\setminus D,\\
    \boldsymbol E(\boldsymbol{x})=O(|\boldsymbol{x}|^{-1}) &\text{as}\,\,|\boldsymbol x|\to\infty.
    \end{cases}
\end{equation}
The uniqueness and existence of the solution of the problem \eqref{eddy-E} is known \cite{Uniqueness} when $\boldsymbol E$ is constrained to space
\begin{equation*}\label{eq:space-E}
    \begin{aligned}
    \boldsymbol X(\mathbb R^3):=\left\{
    \boldsymbol u\left|
    \frac{\boldsymbol u}{\sqrt{1+|\boldsymbol x|^2}}\in \boldsymbol L^2(\mathbb R^3),\,
    \nabla\times\boldsymbol u\in\boldsymbol L^2(\mathbb R^3),\,
    \right.\right.\qquad\qquad\\
    \left.
    \nabla\cdot\boldsymbol u=0\,\,\,\text{in}\,\,\,\mathbb R^3\setminus D,\,
    \int_{\partial D_\alpha}\boldsymbol u|_{\mathbb R^3\setminus D}\cdot\boldsymbol n=0
    \right\}.
    \end{aligned}
\end{equation*}
Here $\partial D_\alpha$ stands for the connected components of $\partial D$, and $\boldsymbol n$ denotes the unit outward normal on $\partial D$. The constraint $\int_{\partial D_\alpha}\boldsymbol u|_{\mathbb R^3\setminus D}\cdot\boldsymbol n=0$ only serves to enforce the uniqueness of $\boldsymbol E$, which is not essential \cite{Uniqueness}. 

We denote $\boldsymbol E_0$ the solution of homogeneous model with $\sigma=0$ in $\mathbb R^3$, which solves
\begin{equation}\label{eddy-E0}
    \begin{cases}
    \nabla\times(\mu^{-1}\nabla\times\boldsymbol{E}_0)=i\omega\boldsymbol{J}_0 &\text{in}\,\,\mathbb{R}^3,\\
    \nabla\cdot\boldsymbol{E}_0=0 &\text{in}\,\,\mathbb{R}^3,\\
    \boldsymbol E_0(\boldsymbol{x})=O(|\boldsymbol{x}|^{-1}) &\text{as}\,\,|\boldsymbol x|\to\infty.
    \end{cases}
\end{equation}
Similarly, problem \eqref{eddy-E0} has a unique solution when $\boldsymbol E_0$ is constrained to space
\begin{equation*}\label{eq:space-E0}
    \begin{aligned}
    \boldsymbol X_0(\mathbb R^3):=\left\{
    \boldsymbol u\left|
    \frac{\boldsymbol u}{\sqrt{1+|\boldsymbol x|^2}}\in \boldsymbol L^2(\mathbb R^3),\,
    \nabla\times\boldsymbol u\in\boldsymbol L^2(\mathbb R^3),~\nabla\cdot\boldsymbol{u}=0~\text{in}~\mathbb R^3\right.\right\}.
    \end{aligned}
\end{equation*}

Now we present the integral representation formula for the magnetic perturbation field arising from the conductive inclusion \cite{AMMARI2014}. This formula serves as the foundation for deriving our direct sampling method. Recall that $\boldsymbol H=\frac1{i\omega\mu}\nabla\times\boldsymbol E$, $\boldsymbol H_0=\frac1{i\omega\mu}\nabla\times\boldsymbol  E_0$. 
\begin{theorem}\label{th:integral-Hs}
Define the perturbation field $\boldsymbol H^s:=\boldsymbol H-\boldsymbol H_0$. Then for $\boldsymbol x\in\mathbb R^3\setminus D$, we have
\begin{equation}\label{eq:integral-Hs}
    \boldsymbol H^s(\boldsymbol x)=\int_D\nabla_{\boldsymbol x}G(\boldsymbol x,\boldsymbol y)\times(\nabla_{\boldsymbol y}\times\boldsymbol H^s(\boldsymbol y))d\boldsymbol y,
\end{equation}
where $G(\boldsymbol x,\boldsymbol y)=\frac1{4\pi|\boldsymbol x-\boldsymbol y|}$ is the fundamental solution of the Laplace equation in free space.
\end{theorem}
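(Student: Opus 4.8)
The plan is to first identify the partial differential equations that the perturbation field $\boldsymbol H^s$ satisfies on all of $\mathbb R^3$, and then to represent $\boldsymbol H^s$ by a vector potential whose source is the induced current, compactly supported in $D$. First I would record the two structural facts that drive everything. Reading off the second line of \eqref{eq:eddy-E&H} gives $\nabla\times\boldsymbol H=\sigma\boldsymbol E+\boldsymbol J_0$, while \eqref{eddy-E0} combined with $\boldsymbol H_0=\frac1{i\omega\mu}\nabla\times\boldsymbol E_0$ yields $\nabla\times\boldsymbol H_0=\boldsymbol J_0$; subtracting,
\begin{equation*}
\nabla\times\boldsymbol H^s=\sigma\boldsymbol E=:\boldsymbol J^s,\qquad \mathrm{supp}\,\boldsymbol J^s\subset\overline D .
\end{equation*}
Moreover $\nabla\cdot\boldsymbol H^s=0$ in $\mathbb R^3$, since $\boldsymbol H$ and $\boldsymbol H_0$ are (up to the constant $\tfrac1{i\omega\mu}$) curls of $\boldsymbol E$ and $\boldsymbol E_0$ and the distributional divergence of a curl vanishes; and $\boldsymbol H^s=O(|\boldsymbol x|^{-1})$ from the radiation behaviour in \eqref{eq:eddy-E&H}. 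Thus $\boldsymbol H^s$ is a divergence-free, decaying field whose curl is the $\boldsymbol L^2$ current $\boldsymbol J^s$ supported in $\overline D$.

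Next I would build the vector potential by convolution with the Laplace fundamental solution. Writing $\Phi(\boldsymbol x)=\tfrac1{4\pi|\boldsymbol x|}$, so $G(\boldsymbol x,\boldsymbol y)=\Phi(\boldsymbol x-\boldsymbol y)$ and $-\Delta\Phi=\delta$, define the componentwise convolution
\begin{equation*}
\boldsymbol A(\boldsymbol x)=\int_{\mathbb R^3}G(\boldsymbol x,\boldsymbol y)\,\boldsymbol J^s(\boldsymbol y)\,d\boldsymbol y=\int_D G(\boldsymbol x,\boldsymbol y)\,\boldsymbol J^s(\boldsymbol y)\,d\boldsymbol y .
\end{equation*}
Since $\boldsymbol J^s\in\boldsymbol L^2(D)$ has compact support, $\boldsymbol A$ is well defined, belongs to $H^2_{\mathrm{loc}}$, and decays like $O(|\boldsymbol x|^{-1})$. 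Because derivatives commute with convolution, $-\Delta\boldsymbol A=(-\Delta\Phi)*\boldsymbol J^s=\boldsymbol J^s$ and $\nabla\cdot\boldsymbol A=\Phi*(\nabla\cdot\boldsymbol J^s)=0$, using $\nabla\cdot\boldsymbol J^s=\nabla\cdot(\nabla\times\boldsymbol H^s)=0$ in $\mathbb R^3$. The identity $\nabla\times(\nabla\times\boldsymbol A)=\nabla(\nabla\cdot\boldsymbol A)-\Delta\boldsymbol A$ then gives $\nabla\times(\nabla\times\boldsymbol A)=\boldsymbol J^s=\nabla\times\boldsymbol H^s$.

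To conclude $\boldsymbol H^s=\nabla\times\boldsymbol A$, set $\boldsymbol w:=\boldsymbol H^s-\nabla\times\boldsymbol A$. Then $\nabla\times\boldsymbol w=0$ and $\nabla\cdot\boldsymbol w=0$ in $\mathbb R^3$, so $-\Delta\boldsymbol w=\nabla\times(\nabla\times\boldsymbol w)-\nabla(\nabla\cdot\boldsymbol w)=0$, and each Cartesian component of $\boldsymbol w$ is distributionally harmonic, hence smooth and classically harmonic by Weyl's lemma. Since both $\boldsymbol H^s$ and $\nabla\times\boldsymbol A$ are $O(|\boldsymbol x|^{-1})$, $\boldsymbol w\to0$ at infinity, and a harmonic function on $\mathbb R^3$ vanishing at infinity is identically zero; thus $\boldsymbol H^s=\nabla\times\boldsymbol A$. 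Finally, for $\boldsymbol x\in\mathbb R^3\setminus D$ the kernel $G(\boldsymbol x,\cdot)$ is smooth on $\mathrm{supp}\,\boldsymbol J^s$, so I may differentiate under the integral sign,
\begin{equation*}
\boldsymbol H^s(\boldsymbol x)=\nabla_{\boldsymbol x}\times\int_D G(\boldsymbol x,\boldsymbol y)\,\boldsymbol J^s(\boldsymbol y)\,d\boldsymbol y=\int_D\nabla_{\boldsymbol x}G(\boldsymbol x,\boldsymbol y)\times\big(\nabla_{\boldsymbol y}\times\boldsymbol H^s(\boldsymbol y)\big)\,d\boldsymbol y,
\end{equation*}
which is exactly \eqref{eq:integral-Hs}.

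The main obstacle I anticipate is the low regularity of the source: since $\sigma$ jumps across $\partial D$, the current $\boldsymbol J^s=\sigma\boldsymbol E$ is only $\boldsymbol L^2$, so the relations $\nabla\cdot\boldsymbol J^s=0$, $\nabla\cdot\boldsymbol A=0$ and $\nabla\times(\nabla\times\boldsymbol A)=\boldsymbol J^s$ must be read distributionally, and the harmonicity of $\boldsymbol w$ justified via Weyl's lemma rather than by pointwise differentiation. An equivalent but more hands-on route would instead start from $-\Delta\boldsymbol H^s=\nabla\times\boldsymbol J^s$ and integrate $\int_D G\,(\nabla_{\boldsymbol y}\times\boldsymbol J^s)$ by parts to transfer the curl onto $G$; there the delicate point is the surface term on $\partial D$ produced by the tangential jump of $\boldsymbol J^s$, which must be shown to cancel. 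I would therefore emphasize the potential-based argument, whose only genuinely analytic inputs are the mapping properties of the Newtonian potential and the Liouville-type uniqueness at infinity, because it never differentiates the irregular current $\boldsymbol J^s$ directly.
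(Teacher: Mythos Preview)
Your argument is correct and self-contained. In fact the paper does not give a proof of this theorem at all: immediately after the statement it simply writes ``See \cite{AMMARI2014} for the proof of the above theorem,'' and then moves on to the consequence $\nabla_{\boldsymbol y}\times\boldsymbol H^s=\sigma\boldsymbol E$ that you also derive as your first step. So there is nothing in the paper to compare against beyond that citation.

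For what it is worth, your route---show $\nabla\times\boldsymbol H^s=\sigma\boldsymbol E$ is compactly supported, build the Newtonian vector potential $\boldsymbol A=G*\boldsymbol J^s$, use $\nabla\cdot\boldsymbol J^s=0$ to get the Coulomb gauge $\nabla\cdot\boldsymbol A=0$, and identify $\boldsymbol H^s=\nabla\times\boldsymbol A$ via a Liouville argument---is exactly the standard Biot--Savart derivation one finds in the cited literature, and your handling of the low-regularity issue (reading everything distributionally and invoking Weyl's lemma for the harmonicity of $\boldsymbol w$) is the right way to make it rigorous under the stated hypotheses $\sigma\in L^\infty$, $\boldsymbol E\in\boldsymbol X(\mathbb R^3)$. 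One tiny sharpening: since $\boldsymbol J^s$ has compact support, $\nabla\times\boldsymbol A$ actually decays like $O(|\boldsymbol x|^{-2})$, not merely $O(|\boldsymbol x|^{-1})$; this does not affect the argument but makes the Liouville step even more comfortable.
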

See \cite{AMMARI2014} for the proof of the above theorem. 
Note that 
\begin{equation*}
    \nabla_{\boldsymbol y}\times\boldsymbol H^s(\boldsymbol y)=\left(\boldsymbol{J}_0(\boldsymbol y)+\sigma\boldsymbol E(\boldsymbol y)\right)-\boldsymbol{J}_0(\boldsymbol y)=\sigma\boldsymbol E(\boldsymbol y),
\end{equation*}
so that \eqref{eq:integral-Hs} can be simplified to
\begin{equation}\label{eq:int}
    \begin{aligned}
    \boldsymbol H^s(\boldsymbol x)=&\int_D\nabla_{\boldsymbol x}G(\boldsymbol x,\boldsymbol y)\times(\sigma\boldsymbol E)(\boldsymbol y)d\boldsymbol y\\
    =&\int_D|(\sigma\boldsymbol E)(\boldsymbol y)|\left(\nabla_{\boldsymbol x}G(\boldsymbol x,\boldsymbol y)\times\widehat{\boldsymbol E}(\boldsymbol y)\right)d\boldsymbol y,
    \end{aligned}
\end{equation}
where $\widehat{\boldsymbol E}$ is the normalization of $\boldsymbol E$, i.e., $\widehat{\boldsymbol E}=\frac{\boldsymbol E}{|\boldsymbol E|}$. 

Suppose we record magnetic field data on a smooth surface $\Gamma$ outside $D$. For simplicity, we shall choose $\Gamma=\partial B_R$, where $B_R$ denotes the open sphere with radius $R$ centered at the origin. We further assume that the radius $R$ is large enough so that the distance between $D$ and $\Gamma$ is greater than a positive constant $d$, i.e., $|\boldsymbol x-\boldsymbol y|>d>0$ for any $\boldsymbol x\in\Gamma$, $\boldsymbol y\in D$. 

The inverse problem considered herein aims to reconstruct the conductive area $D$ from magnetic near-field data $\boldsymbol H|_{\Gamma}$. For this purpose, we propose a direct sampling method for the MIT problem, which computes an index function $I:\Omega\to[0,1]$ for every measurement $\boldsymbol{\mathcal{M}}=\boldsymbol H|_{\Gamma}$. In the following sections, we assume that the sampling domain $\Omega$ is chosen to fully contain the unknown conductive region $D$, i.e., $D\subset\Omega$. 

\section{Direct sampling method for MIT}\label{sect3}
In this section, we begin by introducing the dual product $\langle\cdot,\cdot\rangle_\gamma$ and its crucial decay property, which lays the theoretical foundation for our algorithm. Next, we define a family of point spread functions $K$ and analyze how the parameter $\gamma$ controls their attenuation. Then we discuss the selection of vector $\boldsymbol\beta$, and finally present the complete procedure of our direct sampling algorithm.

\subsection{Point spread functions}
First, we define the following duality product $\langle \cdot,\cdot\rangle_\gamma$ on $\Gamma$, which is mainly motivated by \cite{Chow2014}: 
\begin{equation}\label{eq:product}
    \begin{aligned}
    \langle\boldsymbol a,\boldsymbol b\rangle_\gamma:=&
    \langle\boldsymbol a,(-\Delta_\Gamma)^\gamma \boldsymbol b\rangle_{L^2(\Gamma)}\\
    =&\int_\Gamma\boldsymbol  a\cdot\overline{(-\Delta_\Gamma)^\gamma \boldsymbol b}\,d\Gamma,
    \quad\text{for}\quad 
    \boldsymbol a\in \boldsymbol L^2(\Gamma),\,
    \boldsymbol b\in \boldsymbol H^{2\gamma}(\Gamma)
    \end{aligned}
\end{equation}
with $\gamma\in2\mathbb N$, where $\mathbb{N}$ is the set of non-negative integers. Here $\Delta_\Gamma$ is the Laplace-Beltrami operator and is applied to each component of vector $\boldsymbol b$, i.e., $(-\Delta_\Gamma)^\gamma \boldsymbol b:=((-\Delta_\Gamma )^\gamma b_1, \cdots, (-\Delta_\Gamma)^\gamma b_n)$. Note that in $\boldsymbol H^{2\gamma}(\Gamma)$ this product is self-adjoint and semi-positive definite, thus inducing a semi-norm, denoted by $|\cdot|_\gamma$. 

For the spherical surface $\Gamma=\partial B_R$ in $\mathbb{R}^3$, the Laplace-Beltrami operator $\Delta_\Gamma$ admits a simple expression in spherical coordinates \cite[P28]{SHAPES}, 
\begin{equation*}
    \Delta_\Gamma=\Delta-\frac{\partial^2}{\partial r^2}-\frac2r\frac{\partial}{\partial r},
\end{equation*}
where $\Delta$ is the Laplace operator in $\mathbb{R}^3$. This means we can compute $(-\Delta_\Gamma)^\gamma$ using its analytical form instead of numerical differentiation, which significantly improves computational efficiency and avoids potential errors.

Now we discuss the duality product \eqref{eq:product} of a specific class of functions. Consider the fundamental solution of Laplace equation and its gradient
\begin{equation*}
    G(\boldsymbol x, \boldsymbol y)
    =\frac1{4\pi|\boldsymbol x - \boldsymbol y|},\quad
    \nabla_{\boldsymbol x}G(\boldsymbol x, \boldsymbol y)
    =-\frac{\boldsymbol x - \boldsymbol y}
    {4\pi|\boldsymbol x - \boldsymbol y|^3}.
\end{equation*}
Clearly $\nabla_{\boldsymbol x}G(\cdot,\boldsymbol y)\in \boldsymbol H^{2\gamma}(\Gamma)$ for any given $\boldsymbol y\in B_R$. Let $S=\{\boldsymbol\alpha\in\mathbb C^3||\boldsymbol\alpha|=1\}$ be the set of all complex unit vectors. Define the point spread functions 
\begin{equation}\label{funcK}
    \begin{aligned}
        K_{(\boldsymbol y,\boldsymbol \alpha)}(\boldsymbol z,\boldsymbol\beta)
        :=&\frac{\langle \nabla_{\boldsymbol x}G(\cdot, \boldsymbol y)\times\boldsymbol\alpha, \nabla_{\boldsymbol x}G(\cdot, \boldsymbol z)\times\boldsymbol\beta \rangle_\gamma}
        {| \nabla_{\boldsymbol x}G(\cdot, \boldsymbol z)\times\boldsymbol\beta |_\gamma}\\
        =&\cfrac{\int_\Gamma\left(-\frac{\boldsymbol x - \boldsymbol y}
        {4\pi|\boldsymbol x - \boldsymbol y|^3}\times\boldsymbol{\alpha}\right)\cdot\left((-\Delta_\Gamma)^{\gamma}\left(-\frac{\boldsymbol x - \boldsymbol z}
        {4\pi|\boldsymbol x - \boldsymbol z|^3}\right)\times\bar{\boldsymbol{\beta}}\right)d\boldsymbol{x}}{\left(\int_\Gamma\left|(-\Delta_\Gamma)^{\gamma/2}\left(-\frac{\boldsymbol x - \boldsymbol y}
        {4\pi|\boldsymbol x - \boldsymbol y|^3}\right)\times\boldsymbol{\beta}\right|^2d\boldsymbol{x}\right)^{1/2}}
    \end{aligned}
\end{equation}
for $\boldsymbol y,\boldsymbol{z}\in B_R$ and unit vectors $\boldsymbol\alpha, \boldsymbol\beta\in S$, where all operators $(-\Delta_\Gamma)$ in $\langle\cdot,\cdot\rangle_\gamma$ and $|\cdot|_\gamma$ are applied to the variable $\boldsymbol{x}$. The following proposition ensures that $K$ is well defined. 

\begin{prop}\label{prop:non-zero}
The denominator of point spread functions $K_{(\boldsymbol y,\boldsymbol \alpha)}$ in \eqref{funcK} is non-zero, i.e., $| \nabla_{\boldsymbol x}G(\cdot, \boldsymbol z)\times\boldsymbol\beta |_\gamma\ne 0$ for any $\boldsymbol{z}\in B_R$, $\boldsymbol{\beta}\in S$ and $\gamma\in2\mathbb N$. 
\end{prop}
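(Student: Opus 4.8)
I would prove \Cref{prop:non-zero} by contradiction, reducing the non-vanishing of the seminorm to a statement about the function
$\boldsymbol a(\boldsymbol x):=\nabla_{\boldsymbol x}G(\boldsymbol x,\boldsymbol z)\times\boldsymbol\beta=-\frac{\boldsymbol x-\boldsymbol z}{4\pi|\boldsymbol x-\boldsymbol z|^3}\times\boldsymbol\beta$ on $\Gamma=\partial B_R$. The plan is first to simplify the seminorm. Since $\gamma\in2\mathbb N$, the exponent $\gamma/2$ is a non-negative integer, and on the closed surface $\Gamma$ the operator $-\Delta_\Gamma$ is self-adjoint and nonnegative; applying Green's identity on $\Gamma$ (which carries no boundary terms) $\gamma/2$ times gives $|\boldsymbol a|_\gamma^2=\langle\boldsymbol a,\boldsymbol a\rangle_\gamma=\|(-\Delta_\Gamma)^{\gamma/2}\boldsymbol a\|_{L^2(\Gamma)}^2$, which is exactly the square of the denominator appearing in \eqref{funcK}. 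Hence $|\boldsymbol a|_\gamma=0$ if and only if $(-\Delta_\Gamma)^{\gamma/2}\boldsymbol a=0$. The kernel of $(-\Delta_\Gamma)^{\gamma/2}$ on the connected surface $\Gamma$ is trivial when $\gamma=0$ and consists of the constant functions when $\gamma\ge2$; applying this componentwise reduces the proposition to showing that each component $a_j$ is \emph{not} a.e. equal to a constant $c_j$ on $\Gamma$ (with $c_j=0$ forced in the case $\gamma=0$).

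Next I would exploit harmonicity. Each $\partial_kG(\cdot,\boldsymbol z)$ is harmonic in $\mathbb R^3\setminus\{\boldsymbol z\}$, so every component $a_j$, being a complex-linear combination of the $\partial_kG(\cdot,\boldsymbol z)$, is harmonic there (its real and imaginary parts separately). Because $\boldsymbol z\in B_R$, $a_j$ is harmonic and real-analytic in the exterior region $\{|\boldsymbol x|>R\}$ and, being built from first derivatives of $G$, decays like $O(|\boldsymbol x|^{-2})$ as $|\boldsymbol x|\to\infty$. Assuming for contradiction $a_j=c_j$ a.e.\ on $\Gamma$, I would use the exterior multipole (spherical-harmonic) expansion $a_j(\boldsymbol x)=\sum_{n\ge0,m}c_{nm}|\boldsymbol x|^{-(n+1)}Y_n^m(\widehat{\boldsymbol x})$, valid for $|\boldsymbol x|>R$: the $O(|\boldsymbol x|^{-2})$ decay kills the $n=0$ (monopole) term, while the boundary value $c_j$ is pure $n=0$, so matching coefficients on $|\boldsymbol x|=R$ forces $c_j=0$ and all higher coefficients to vanish, i.e.\ $a_j\equiv0$ in $\{|\boldsymbol x|>R\}$. (Equivalently, one invokes uniqueness for the exterior Dirichlet problem: the solution with boundary data $c_j$ and decay at infinity is $c_j R/|\boldsymbol x|$, whose $|\boldsymbol x|^{-1}$ decay is incompatible with $O(|\boldsymbol x|^{-2})$ unless $c_j=0$.)

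Finally I would extract the contradiction directly from the exterior, avoiding any appeal to unique continuation into all of $\mathbb R^3\setminus\{\boldsymbol z\}$. Having $\boldsymbol a\equiv0$ on the open set $\{|\boldsymbol x|>R\}$ means $(\boldsymbol x-\boldsymbol z)\times\boldsymbol\beta=0$ for all such $\boldsymbol x$; as $\boldsymbol x$ ranges over this exterior, $\boldsymbol w:=\boldsymbol x-\boldsymbol z$ ranges over an open subset of $\mathbb R^3$ containing two linearly independent real vectors $\boldsymbol w_1,\boldsymbol w_2$. Writing $\boldsymbol\beta=\boldsymbol\beta_R+i\boldsymbol\beta_I$, the conditions $\boldsymbol w_1\times\boldsymbol\beta=\boldsymbol w_2\times\boldsymbol\beta=0$ give $\boldsymbol w_1\times\boldsymbol\beta_R=\boldsymbol w_2\times\boldsymbol\beta_R=0$ and likewise for $\boldsymbol\beta_I$; a real vector parallel to two independent vectors must vanish, so $\boldsymbol\beta_R=\boldsymbol\beta_I=0$, i.e.\ $\boldsymbol\beta=0$, contradicting $|\boldsymbol\beta|=1$.

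The routine portion is the seminorm reduction of the first paragraph. The main obstacle is the middle step: upgrading ``constant on $\Gamma$'' to ``identically zero in the exterior,'' which hinges on combining the precise $O(|\boldsymbol x|^{-2})$ decay of $\nabla_{\boldsymbol x}G$ with exterior harmonic uniqueness. Two points require care there and should be flagged in the write-up: the vector $\boldsymbol\beta$ is complex, so the harmonic-function argument must be run on the real and imaginary parts of each component, and the degenerate case $\gamma=0$ must be covered, where the reduction already yields $\boldsymbol a=0$ a.e.\ (the special instance $c_j=0$) and the same exterior argument then gives the contradiction.
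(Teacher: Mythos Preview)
Your proof is correct. The opening reduction---using self-adjointness of $-\Delta_\Gamma$ to rewrite $|\boldsymbol a|_\gamma^2=\|(-\Delta_\Gamma)^{\gamma/2}\boldsymbol a\|_{L^2(\Gamma)}^2$ and then invoking that $\mathrm{Null}\bigl((-\Delta_\Gamma)^{\gamma/2}\bigr)$ consists of constants---matches the paper's argument exactly. The paper, however, stops at that point: it simply asserts that $\nabla_{\boldsymbol x}G(\cdot,\boldsymbol z)\times\boldsymbol\beta$ being constant on $\Gamma$ is a contradiction, without justification. Your harmonic-extension step (exterior Dirichlet uniqueness plus the $O(|\boldsymbol x|^{-2})$ decay of $\nabla G$ to force $c_j=0$ and hence $\boldsymbol a\equiv0$ outside $B_R$, then the linear-algebra argument $\boldsymbol w_1\times\boldsymbol\beta=\boldsymbol w_2\times\boldsymbol\beta=0\Rightarrow\boldsymbol\beta=0$) is a genuine and rigorous completion of what the paper leaves implicit. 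What your route buys is a self-contained proof that does not rely on the reader supplying the ``obviously not constant'' step; the cost is a slightly longer argument invoking the exterior expansion, whereas the paper could in principle close the gap by a direct inspection on $\Gamma$ (e.g.\ observing that $|\boldsymbol x-\boldsymbol z|$ is nonconstant on the sphere for $\boldsymbol z\ne0$ and treating $\boldsymbol z=0$ separately). Your care with complex $\boldsymbol\beta$ and with the $\gamma=0$ case is appropriate and correctly handled.
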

\begin{proof}
    The case $\gamma=0$ is obvious. For $\gamma\ge 2$, we prove the result by contradiction. Assume
    \begin{equation*}
        | \nabla_{\boldsymbol x}G(\cdot, \boldsymbol z)\times\boldsymbol\beta |_\gamma=\left(\int_\Gamma\left|(-\Delta_\Gamma)^{\gamma/2}\left(\nabla_{\boldsymbol x}G(\boldsymbol{x}, \boldsymbol z)\times\boldsymbol\beta\right)\right|^2d\boldsymbol{x}\right)^{1/2}=0,
    \end{equation*}
    then $(-\Delta_\Gamma)^{\gamma/2}\left(\nabla_{\boldsymbol x}G(\boldsymbol{x}, \boldsymbol z)\times\boldsymbol\beta\right)\overset{a.e.}{=}\boldsymbol{0}$. Since $-\Delta_\Gamma$ is self-adjoint, we have $\text{Null}((-\Delta_\Gamma)^{\gamma/2})=\text{Null}(-\Delta_\Gamma)$, which gives
    \begin{equation*}
        (-\Delta_\Gamma)\left(\nabla_{\boldsymbol x}G(\boldsymbol{x}, \boldsymbol z)\times\boldsymbol\beta\right)\overset{a.e.}{=}\boldsymbol{0}.
    \end{equation*}
    However, according to \cite[P125]{Jost2017}, constant functions are the only functions in $\text{Null}(-\Delta_\Gamma)$ when $\Gamma$ is a compact Riemannian manifold. This contradiction proves the proposition. 
\end{proof}

We then proceed to prove several fundamental properties of $K$, 
which will show that $K$ takes relatively large values near $(\boldsymbol y,\boldsymbol\alpha)$ and decays to zero when $\boldsymbol{z}$ is close to $\Gamma$. 

\begin{prop}
    $K_{(\boldsymbol y,\boldsymbol\alpha)}(\boldsymbol z,\boldsymbol\beta)$ is bounded for all $\boldsymbol{z}\in B_R$. Moreover, it attains its maximum at $(\boldsymbol z,\boldsymbol\beta)=(\boldsymbol y,\boldsymbol\alpha)$.
\end{prop}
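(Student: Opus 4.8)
The plan is to read the result directly off the Cauchy--Schwarz inequality for the semi-inner product $\langle\cdot,\cdot\rangle_\gamma$. Writing $\boldsymbol a:=\nabla_{\boldsymbol x}G(\cdot,\boldsymbol y)\times\boldsymbol\alpha$ and $\boldsymbol b:=\nabla_{\boldsymbol x}G(\cdot,\boldsymbol z)\times\boldsymbol\beta$, both of which lie in $\boldsymbol H^{2\gamma}(\Gamma)$ as already observed before the proposition, the point spread function is simply $K_{(\boldsymbol y,\boldsymbol\alpha)}(\boldsymbol z,\boldsymbol\beta)=\langle\boldsymbol a,\boldsymbol b\rangle_\gamma/|\boldsymbol b|_\gamma$, where the denominator is nonzero by Proposition~\ref{prop:non-zero}, so the quotient is well defined.

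First I would record that $\langle\cdot,\cdot\rangle_\gamma$ is a positive semidefinite sesquilinear form on $\boldsymbol H^{2\gamma}(\Gamma)$ --- this is precisely the self-adjointness and semi-positivity noted after its definition, which follow from the self-adjointness of $-\Delta_\Gamma$ together with the identity $\langle\boldsymbol a,\boldsymbol b\rangle_\gamma=\langle(-\Delta_\Gamma)^{\gamma/2}\boldsymbol a,(-\Delta_\Gamma)^{\gamma/2}\boldsymbol b\rangle_{L^2(\Gamma)}$, valid because $\gamma/2\in\mathbb N$. The Cauchy--Schwarz inequality for such a form then yields $|\langle\boldsymbol a,\boldsymbol b\rangle_\gamma|\le|\boldsymbol a|_\gamma\,|\boldsymbol b|_\gamma$, and dividing by $|\boldsymbol b|_\gamma>0$ gives $|K_{(\boldsymbol y,\boldsymbol\alpha)}(\boldsymbol z,\boldsymbol\beta)|\le|\boldsymbol a|_\gamma$. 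Since the right-hand side depends only on the fixed data $\boldsymbol y,\boldsymbol\alpha,\gamma$ and not on $(\boldsymbol z,\boldsymbol\beta)$, this establishes boundedness uniformly in $\boldsymbol z\in B_R$ and $\boldsymbol\beta\in S$.

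For the maximum I would simply substitute $(\boldsymbol z,\boldsymbol\beta)=(\boldsymbol y,\boldsymbol\alpha)$, so that $\boldsymbol b=\boldsymbol a$ and $K_{(\boldsymbol y,\boldsymbol\alpha)}(\boldsymbol y,\boldsymbol\alpha)=\langle\boldsymbol a,\boldsymbol a\rangle_\gamma/|\boldsymbol a|_\gamma=|\boldsymbol a|_\gamma^2/|\boldsymbol a|_\gamma=|\boldsymbol a|_\gamma$, which matches the upper bound; as this value is real and positive, the modulus of $K$ (and indeed its real part) is maximized there. The only point demanding any care is invoking Cauchy--Schwarz for a possibly degenerate form, but since $|\boldsymbol b|_\gamma\neq0$ the standard argument --- expanding $\langle\boldsymbol a-t\boldsymbol b,\boldsymbol a-t\boldsymbol b\rangle_\gamma\ge0$ and optimizing over $t\in\mathbb C$ --- applies verbatim. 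I therefore do not anticipate a genuine obstacle: the content of the proposition is essentially the geometric fact that a normalized projection is largest along its own direction.
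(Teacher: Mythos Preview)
Your argument is correct and matches the paper's own proof essentially line for line: both apply the Cauchy--Schwarz inequality for the semi-inner product $\langle\cdot,\cdot\rangle_\gamma$ to bound $|K|$ by $|\nabla_{\boldsymbol x}G(\cdot,\boldsymbol y)\times\boldsymbol\alpha|_\gamma$, and then observe that this bound is attained at $(\boldsymbol z,\boldsymbol\beta)=(\boldsymbol y,\boldsymbol\alpha)$. Your added remarks on why Cauchy--Schwarz remains valid for the semidefinite form are a welcome elaboration but do not change the route.
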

\begin{proof}
    By the Cauchy-Schwarz inequality, we have
    \begin{equation*}
        \left|\langle \nabla_{\boldsymbol x}G(\cdot, \boldsymbol y)\times\boldsymbol\alpha, \nabla_{\boldsymbol x}G(\cdot, \boldsymbol z)\times\boldsymbol\beta \rangle_\gamma\right|
        \le|\nabla_{\boldsymbol x}G(\cdot, \boldsymbol y)\times\boldsymbol\alpha|_\gamma|\nabla_{\boldsymbol x}G(\cdot, \boldsymbol z)\times\boldsymbol\beta|_\gamma,
    \end{equation*}
    therefore,
    \begin{equation*}
        \left|K_{(\boldsymbol y,\boldsymbol\alpha)}(\boldsymbol z,\boldsymbol\beta)\right|=\frac{\left|\langle \nabla_{\boldsymbol x}G(\cdot, \boldsymbol y)\times\boldsymbol\alpha, \nabla_{\boldsymbol x}G(\cdot, \boldsymbol z)\times\boldsymbol\beta \rangle_\gamma\right|}{|\nabla_{\boldsymbol x}G(\cdot, \boldsymbol z)\times\boldsymbol\beta|_\gamma}\le|\nabla_{\boldsymbol x}G(\cdot, \boldsymbol y)\times\boldsymbol\alpha|_\gamma.
    \end{equation*}
     The equality holds at $(\boldsymbol z,\boldsymbol\beta)=(\boldsymbol y,\boldsymbol\alpha)$, where $K_{(\boldsymbol y,\boldsymbol\alpha)}(\boldsymbol y,\boldsymbol\alpha)=|\nabla_{\boldsymbol x}G(\cdot, \boldsymbol y)\times\boldsymbol\alpha|_\gamma$ precisely reaches the maximum. 
\end{proof}

We now derive the decaying property of $K$. Without loss of generality, we shall choose the direction of $\boldsymbol{z}$ as the $e_1$ axis of the Cartesian coordinate system so that $\boldsymbol{z}=(z_1,0,0)$. Therefore, we get
\begin{equation*}\label{eq:axis}
    \nabla_{\boldsymbol{x}}G(\boldsymbol{x},\boldsymbol{z})=-\frac{\boldsymbol{x}-(z_1,0,0)}{4\pi|\boldsymbol{x}-(z_1,0,0)|^3}=-\frac{(x_1-z_1,x_2,x_3)}{4\pi\left((x_1-z_1)^2+x_2^2+x_3^2\right)^{3/2}}.
\end{equation*}
For simplicity, we denote the components of $\nabla_{\boldsymbol{x}}G(\boldsymbol{x},\boldsymbol{z})$ by $g_1,g_2,g_3$, that is,
\begin{equation}\label{gi}
    \left\{\begin{aligned}
        g_1=&-\frac{x_1-z_1}{4\pi\left((x_1-z_1)^2+x_2^2+x_3^2\right)^{3/2}},\\
        g_2=&-\frac{x_2}{4\pi\left((x_1-z_1)^2+x_2^2+x_3^2\right)^{3/2}},\\
        g_3=&-\frac{x_3}{4\pi\left((x_1-z_1)^2+x_2^2+x_3^2\right)^{3/2}}.
    \end{aligned}
    \right.
\end{equation}
We will mainly focus on the limit of $K$ as $\boldsymbol{z}$ approaches surface $\Gamma$, which makes the integrals in both the numerator and denominator of $K$ singular. The following lemmas shall provide appropriate estimates for these two singular integrals. 

\begin{lemma}\label{lemma1}
    For $g_i, i=1,2,3$ defined in \eqref{gi}, there exist $\lambda_1>0$ and $C>0$ such that
    \begin{equation}\label{eq:lemma1}
        \int_\Gamma\left((-\Delta_\Gamma)^ng_i\right)^2 d\boldsymbol{x}\ge \frac{C\lambda_1^{2n}}{\left(R^2-|\boldsymbol{z}|^2\right)^2}
    \end{equation}
    for all $n\in\mathbb{N}$. 
\end{lemma}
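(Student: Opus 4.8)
The plan is to diagonalize $-\Delta_\Gamma$ in spherical harmonics, which turns the iterated operator into an explicit spectral multiplier and collapses the whole family of inequalities (one for each $n$) onto a single, $n$-independent estimate.

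First I would use that on $\Gamma=\partial B_R$ the eigenpairs of $-\Delta_\Gamma$ are the (suitably scaled) spherical harmonics $Y_l^m$ with eigenvalues $\lambda_l=l(l+1)/R^2$, and that $\{Y_l^m\}$ form an orthonormal basis of $L^2(\Gamma)$. Since $\boldsymbol z\in B_R$ is interior, each $g_i$ is smooth on $\Gamma$, so $g_i=\sum_{l,m}c^{(i)}_{l,m}Y_l^m$ with the series converging in every $H^s(\Gamma)$ and $(-\Delta_\Gamma)^n g_i=\sum_{l,m}\lambda_l^n c^{(i)}_{l,m}Y_l^m$. Parseval's identity then gives
\begin{equation*}
\int_\Gamma\bigl((-\Delta_\Gamma)^n g_i\bigr)^2\,d\boldsymbol x=\sum_{l,m}\lambda_l^{2n}\,\bigl|c^{(i)}_{l,m}\bigr|^2\ge\lambda_1^{2n}\sum_{l\ge1,m}\bigl|c^{(i)}_{l,m}\bigr|^2,
\end{equation*}
where I discarded the $l=0$ term and used the monotonicity $\lambda_l\ge\lambda_1=2/R^2$ for $l\ge1$. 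A short computation shows that $\int_\Gamma g_i\,d\Gamma=0$ for each $i$ (by the parities $x_2\mapsto-x_2$, $x_3\mapsto-x_3$ for $g_2,g_3$, and by an elementary cancellation of $\int_\Gamma|\boldsymbol x-\boldsymbol z|^{-1}$ against $\int_\Gamma|\boldsymbol x-\boldsymbol z|^{-3}$ for $g_1$), so the $l=0$ coefficient vanishes and $\sum_{l\ge1,m}|c^{(i)}_{l,m}|^2=\|g_i\|_{L^2(\Gamma)}^2$. Thus, with the explicit constant $\lambda_1=2/R^2$, the lemma reduces to the single $n$-free claim $\|g_i\|_{L^2(\Gamma)}^2\ge C/(R^2-|\boldsymbol z|^2)^2$.

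To prove this I would first observe that, by rotational symmetry, $\|g_i\|_{L^2(\Gamma)}^2$ depends on $\boldsymbol z$ only through $a:=|\boldsymbol z|$, so it suffices to take $\boldsymbol z=(a,0,0)$ with $a\in[0,R)$. Parametrizing $\Gamma$ by $t=\cos\phi\in[-1,1]$ (the cosine of the angle to the $e_1$-axis) and the azimuth $\psi$, one has $|\boldsymbol x-\boldsymbol z|^2=R^2-2Rat+a^2=:\rho^2$ and $d\Gamma=R^2\,dt\,d\psi$, so each $\|g_i\|^2$ becomes an elementary one-dimensional integral of a polynomial in $t$ over powers of $\rho^2$, evaluated in closed form via the substitution $u=\rho^2$. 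Extracting the behaviour as $a\to R^-$ (the singularity coming from the endpoint $t=1$, where $\rho^2\to(R-a)^2$) yields $\|g_i\|^2\sim C_i/(R^2-a^2)^2$ with $C_i>0$; equivalently, $\|g_i\|^2(R^2-a^2)^2\to L_i>0$. Finally I would upgrade this asymptotic statement to the claimed uniform bound by continuity and compactness: for each fixed $a\in[0,R)$ the function $g_i$ is genuinely nonconstant on $\Gamma$, so $\|g_i\|^2>0$, and the map $a\mapsto\|g_i\|^2(R^2-a^2)^2$ is continuous and strictly positive on $[0,R)$ and extends continuously to $[0,R]$ with positive endpoint value $L_i$; hence it attains a positive minimum $C_i'>0$ on the compact interval $[0,R]$. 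Taking $C=\min_i C_i'$ and $\lambda_1=2/R^2$ then finishes the proof.

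The main obstacle is the middle step, the singular expansion of $\|g_i\|^2$. The subtle point is that for the transverse components $g_2,g_3$ the integrand carries a factor $1-t^2$ that vanishes precisely at the closest point $t=1$, so the naive leading singular term cancels and one must push the expansion one order further to expose the genuine $(R^2-a^2)^{-2}$ rate with a strictly positive coefficient; care is likewise needed to confirm that the resulting constants $C$ and $\lambda_1$ are genuinely independent of both $n$ and $\boldsymbol z$.
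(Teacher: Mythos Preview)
Your proposal is correct and follows essentially the same route as the paper: both diagonalize $-\Delta_\Gamma$ spectrally, use that each $g_i$ has zero mean to drop the $\lambda_0=0$ mode and reduce the inequality to the $n$-independent bound $\|g_i\|_{L^2(\Gamma)}^2\ge C/(R^2-|\boldsymbol z|^2)^2$, and then establish that bound by explicit computation of the one-dimensional $t$-integrals. The only cosmetic differences are that you name the eigenfunctions as spherical harmonics and give the explicit $\lambda_1=2/R^2$, while the paper works with the abstract spectral decomposition and, in exchange, writes out the closed-form expressions for $\int_\Gamma g_i^2$ directly; your added continuity/compactness remark to make $C$ uniform in $\boldsymbol z$ is a welcome clarification that the paper leaves implicit.
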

\begin{proof}
    By \cite[P125]{Jost2017}, the operator $(-\Delta_\Gamma)$ has nonnegative real eigenvalues $\lambda_0<\lambda_1<\lambda_2<\cdots$. Except for the eigenvalue $\lambda_0=0$ realized for a constant as its eigenfunction, all eigenvalues are positive and $\lim_{k\to\infty}\lambda_k=\infty$. For any $f\in L^2(\Gamma)$, we have the orthogonal decomposition
    \begin{equation*}
        f=\sum_{i=0}^\infty(f,v_i)v_i,
    \end{equation*}
    where $v_i$ is the eigenfunction of $\lambda_i$ and $(\cdot,\cdot)$ is the inner product of $L^2(\Gamma)$. Hence, for $f\in H^{2n}(\Gamma)$ and $n\ge 1$, we have
    \begin{equation*}
        \begin{aligned}
            \int_\Gamma\left((-\Delta_\Gamma)^nf\right)^2 d\boldsymbol{x}=\int_\Gamma\left((-\Delta_\Gamma)^n(f-(f,1)1)\right)^2 d\boldsymbol{x}\ge\lambda_1^{2n}\int_\Gamma \left(f-(f,1)1\right)^2d\boldsymbol{x}.
        \end{aligned}
    \end{equation*}
    It is easy to verify by calculation that $(g_i,1)=\int_\Gamma g_id\boldsymbol{x}=0$ holds for $i=1,2,3$, so that
    \begin{equation*}
        \begin{aligned}
            \int_\Gamma\left((-\Delta_\Gamma)^ng_1\right)^2 d\boldsymbol{x}\ge&\lambda_1^{2n}\int_\Gamma g_1^2d\boldsymbol{x}=\lambda_1^{2n}\cdot\tfrac{R}{32\pi|\boldsymbol{z}|^3}\begin{pmatrix}\ln\left(\frac{R+|\boldsymbol{z}|}{ R-|\boldsymbol{z}|}\right)-\frac{4R|\boldsymbol{z}|}{R^2-|\boldsymbol{z}|^2}+\frac{2R|\boldsymbol{z}|\left(R^2+|\boldsymbol{z}|^2\right)}{\left(R^2-|\boldsymbol{z}|^2\right)^2}\end{pmatrix},\\
            \int_\Gamma\left((-\Delta_\Gamma)^ng_2\right)^2 d\boldsymbol{x}\ge&\lambda_1^{2n}\int_\Gamma g_2^2d\boldsymbol{x}=\lambda_1^{2n}\cdot\tfrac{R}{32\pi|\boldsymbol{z}|^3}\begin{pmatrix}\frac12\ln\left(\frac{R-|\boldsymbol{z}|}{ R+|\boldsymbol{z}|}\right)+\frac{R|\boldsymbol{z}|\left(R^2+|\boldsymbol{z}|^2\right)}{\left(R^2-|\boldsymbol{z}|^2\right)^2}\end{pmatrix},\\
            \int_\Gamma\left((-\Delta_\Gamma)^ng_3\right)^2 d\boldsymbol{x}\ge&\lambda_1^{2n}\int_\Gamma g_3^2d\boldsymbol{x}=\lambda_1^{2n}\cdot\tfrac{R}{32\pi|\boldsymbol{z}|^3}\begin{pmatrix}\frac12\ln\left(\frac{R-|\boldsymbol{z}|}{ R+|\boldsymbol{z}|}\right)+\frac{R|\boldsymbol{z}|\left(R^2+|\boldsymbol{z}|^2\right)}{\left(R^2-|\boldsymbol{z}|^2\right)^2}\end{pmatrix}.
        \end{aligned}
    \end{equation*}
    All the above expressions have the domination term $1/\left(R^2-|\boldsymbol{z}|^2\right)^2$ as $|\boldsymbol{z}|\to R$. Therefore, there exists $C>0$ such that
    \begin{equation*}
        \int_\Gamma\left((-\Delta_\Gamma)^ng_i\right)^2 d\boldsymbol{x}\ge \frac{C\lambda_1^{2n}}{\left(R^2-|\boldsymbol{z}|^2\right)^2}
    \end{equation*}
    for all $n\in\mathbb{N}$ and $i=1,2,3$. 
\end{proof}

\begin{lemma}\label{lemma2}
    For any $\gamma\in 2\mathbb{N}$,
    $$
        |\nabla_{\boldsymbol{x}}G(\cdot,\boldsymbol{z})\times\boldsymbol{\beta}|_\gamma^{-1}=O\left(R^2-|\boldsymbol{z}|^2\right),\quad |\boldsymbol{z}|\to R.
    $$
\end{lemma}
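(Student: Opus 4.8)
The statement is equivalent to the lower bound $|\nabla_{\boldsymbol x}G(\cdot,\boldsymbol z)\times\boldsymbol\beta|_\gamma\ge c\,(R^2-|\boldsymbol z|^2)^{-1}$ as $|\boldsymbol z|\to R$, holding \emph{uniformly} in $\boldsymbol\beta\in S$, so the plan is to bound the semi-norm squared below by $c\,(R^2-|\boldsymbol z|^2)^{-2}$. Writing $n:=\gamma/2\in\mathbb N$ (legitimate since $\gamma\in2\mathbb N$) and $\boldsymbol G:=(-\Delta_\Gamma)^n\nabla_{\boldsymbol x}G(\cdot,\boldsymbol z)=(G_1,G_2,G_3)$ with $G_i=(-\Delta_\Gamma)^ng_i$, I would first use that $(-\Delta_\Gamma)^n$ acts componentwise while $\boldsymbol\beta$ is a constant vector, so that $(-\Delta_\Gamma)^{\gamma/2}(\nabla_{\boldsymbol x}G\times\boldsymbol\beta)=\boldsymbol G\times\boldsymbol\beta$ and hence
\[
|\nabla_{\boldsymbol x}G(\cdot,\boldsymbol z)\times\boldsymbol\beta|_\gamma^2=\int_\Gamma|\boldsymbol G\times\boldsymbol\beta|^2\,d\boldsymbol x .
\]

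Next I would invoke the elementary identity $|\boldsymbol G\times\boldsymbol\beta|^2=|\boldsymbol G|^2|\boldsymbol\beta|^2-|\boldsymbol G\cdot\overline{\boldsymbol\beta}|^2$, valid for real $\boldsymbol G$ and complex $\boldsymbol\beta$ (proved by splitting $\boldsymbol\beta=\boldsymbol p+i\boldsymbol q$ and applying the real Lagrange identity to $\boldsymbol G\times\boldsymbol p$ and $\boldsymbol G\times\boldsymbol q$). Since $|\boldsymbol\beta|=1$, integration gives
\[
|\nabla_{\boldsymbol x}G(\cdot,\boldsymbol z)\times\boldsymbol\beta|_\gamma^2=\operatorname{tr}M-\boldsymbol\beta^*M\boldsymbol\beta,\qquad M:=\int_\Gamma\boldsymbol G\,\boldsymbol G^{\mathrm T}\,d\boldsymbol x,
\]
where $M$ is the real symmetric positive semidefinite Gram matrix with entries $M_{ij}=\int_\Gamma G_iG_j\,d\boldsymbol x$. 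The main obstacle is precisely the subtracted term $\boldsymbol\beta^*M\boldsymbol\beta$: for an unfavourable $\boldsymbol\beta$ it could conceivably cancel the leading $(R^2-|\boldsymbol z|^2)^{-2}$ singularity, and I must rule this out uniformly over $\boldsymbol\beta\in S$.

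To do so I would exploit the axial symmetry of the configuration $\boldsymbol z=(z_1,0,0)$. The reflections $x_2\mapsto-x_2$ and $x_3\mapsto-x_3$ are isometries of $\Gamma$ and therefore commute with $(-\Delta_\Gamma)^n$; from \eqref{gi}, $g_1$ is even in both $x_2$ and $x_3$, while $g_2$ (resp.\ $g_3$) is odd in $x_2$ (resp.\ $x_3$), and these parities pass to $G_1,G_2,G_3$. Consequently each off-diagonal integrand $G_iG_j$ ($i\ne j$) is odd under one of the two reflections, so $M$ is diagonal. Then for any unit vector $\boldsymbol\beta$,
\[
\operatorname{tr}M-\boldsymbol\beta^*M\boldsymbol\beta=\sum_{i=1}^{3}M_{ii}\bigl(1-|\beta_i|^2\bigr)\ge\Bigl(\min_{i}M_{ii}\Bigr)\sum_{i=1}^{3}\bigl(1-|\beta_i|^2\bigr)=2\min_{i}M_{ii},
\]
using $0\le1-|\beta_i|^2$, $\sum_i|\beta_i|^2=1$ and $M_{ii}\ge0$. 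Finally, Lemma \ref{lemma1} applied with this $n$ bounds each diagonal entry $M_{ii}=\int_\Gamma((-\Delta_\Gamma)^ng_i)^2\,d\boldsymbol x$ below by $C\lambda_1^{2n}(R^2-|\boldsymbol z|^2)^{-2}$, whence $|\nabla_{\boldsymbol x}G(\cdot,\boldsymbol z)\times\boldsymbol\beta|_\gamma^2\ge c\,(R^2-|\boldsymbol z|^2)^{-2}$ uniformly in $\boldsymbol\beta$; taking reciprocals and square roots yields the claim.

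I expect the only genuinely delicate point to be this uniformity in $\boldsymbol\beta$, that is, establishing that the cross-product cancellation cannot destroy the blow-up rate. The diagonality of $M$ obtained from the symmetry argument is exactly what makes the issue transparent, reducing the whole estimate to the scalar lower bounds already secured in Lemma \ref{lemma1}.
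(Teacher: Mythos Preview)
Your proposal is correct and follows essentially the same route as the paper: expand $|\nabla_{\boldsymbol x}G(\cdot,\boldsymbol z)\times\boldsymbol\beta|_\gamma^2$ via the Lagrange identity as $\operatorname{tr}M-\boldsymbol\beta^*M\boldsymbol\beta$, use the reflection symmetries of $g_i$ (which commute with $(-\Delta_\Gamma)^n$) to show $M$ is diagonal, bound below by $2\min_iM_{ii}$ using $\sum_i(1-|\beta_i|^2)=2$, and invoke Lemma~\ref{lemma1}. The paper phrases the symmetry step as ``rotation invariance'' of $-\Delta_\Gamma$, but the argument and the resulting chain of inequalities are identical to yours.
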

\begin{proof}
    By definition of $ K_{(\boldsymbol y,\boldsymbol \alpha)}(\boldsymbol z,\boldsymbol\beta)$ in \eqref{funcK}, we have
    \begin{equation}\label{eq:lemma2.1}
        \begin{aligned}
            &|\nabla_{\boldsymbol{x}}G(\cdot,\boldsymbol{z})\times\boldsymbol{\beta}|_\gamma^2
            =\int_\Gamma\left|\left(-\Delta_\Gamma\right)^{\gamma/2}\left(g_1,g_2,g_3\right)\right|^2d\boldsymbol{x}\\
            &\qquad-\boldsymbol{\beta}^T\left(\int_\Gamma\left(\left(-\Delta_\Gamma\right)^{\gamma/2}\left(g_1,g_2,g_3\right)\right)\left(\left(-\Delta_\Gamma\right)^{\gamma/2}\left(g_1,g_2,g_3\right)\right)^Td\boldsymbol{x}\right)\bar{\boldsymbol{\beta}}.
        \end{aligned}
    \end{equation}
    The operator $(-\Delta_\Gamma)$ is rotation invariant, so that $(-\Delta_\Gamma)^{\gamma/2}\left(\nabla_{\boldsymbol{x}}G(\boldsymbol{x},\boldsymbol{z})\right)$ preserves the symmetry property of $\nabla_{\boldsymbol{x}}G(\boldsymbol{x},\boldsymbol{z})$. Consequently, $(-\Delta_\Gamma)^{\gamma/2}g_2$ and $(-\Delta_\Gamma)^{\gamma/2}g_3$ are odd functions of $x_2$ and $x_3$, respectively. This property ensures that
    \begin{equation*}
        \int_\Gamma\left(\left(-\Delta_\Gamma\right)^{\gamma/2}g_i\right)\left(\left(-\Delta_\Gamma\right)^{\gamma/2}g_j\right)d\boldsymbol{x}=0
    \end{equation*}
    when $i\ne j$. Thus \eqref{eq:lemma2.1} can be simplified to
    \begin{equation*}
        \begin{aligned}
            |\nabla_{\boldsymbol{x}}G(\cdot,\boldsymbol{z})\times\boldsymbol{\beta}|_\gamma^2
            =&\sum_{i=1}^3\left(1-\beta_i\bar\beta_i\right)\left(\int_\Gamma\left(\left(-\Delta_\Gamma\right)^{\gamma/2}g_i\right)^2d\boldsymbol{x}\right)\\
            \ge&\left(\sum_{i=1}^3\left(1-\beta_i\bar\beta_i\right)\right)\left(\min_{1\le i\le 3}\int_\Gamma\left(\left(-\Delta_\Gamma\right)^{\gamma/2}g_i\right)^2d\boldsymbol{x}\right)\\
            =&2\min_{1\le i\le 3}\int_\Gamma\left(\left(-\Delta_\Gamma\right)^{\gamma/2}g_i\right)^2d\boldsymbol{x}.
        \end{aligned}
    \end{equation*}
    Using the estimation \eqref{eq:lemma1} in Lemma \ref{lemma1}, we know that
    \begin{equation*}
        \begin{aligned}
            |\nabla_{\boldsymbol{x}}G(\cdot,\boldsymbol{z})\times\boldsymbol{\beta}|_\gamma^2
            \ge\frac{2C\lambda_1^\gamma}{\left(R^2-|\boldsymbol{z}|^2\right)^2},
        \end{aligned}
    \end{equation*}
    which proves the lemma.
\end{proof}

With the previous two lemmas, we can derive the main estimate for the point spread function $K$.
\begin{theorem}\label{thm:decay}
    For any fixed $\gamma\in2\mathbb{N}$, $\boldsymbol{y}\in B_R$ and $\boldsymbol{\alpha}\in S$, we have
    \begin{equation}\label{eq:decay}
        \lim_{|\boldsymbol{z}|\to R}K_{(\boldsymbol y,\boldsymbol\alpha)}(\boldsymbol z,\boldsymbol\beta)=0.
    \end{equation}
    This limit is uniform for all $\boldsymbol{\beta}\in S$.
\end{theorem}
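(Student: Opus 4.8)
The plan is to show that the numerator of $K$ blows up strictly more slowly than its denominator as $|\boldsymbol z|\to R$. The denominator is already under control: Lemma~\ref{lemma2} gives $|\nabla_{\boldsymbol x}G(\cdot,\boldsymbol z)\times\boldsymbol\beta|_\gamma\ge c\,(R^2-|\boldsymbol z|^2)^{-1}$ for some $c>0$, and inspecting its proof shows that $c$ is independent of $\boldsymbol\beta$ (the lower bound there comes from the identity $\sum_i(1-\beta_i\bar\beta_i)=2$ together with the $\boldsymbol\beta$-free estimate of Lemma~\ref{lemma1}). It therefore suffices to bound the numerator, uniformly in $\boldsymbol\beta$, by a quantity that grows like $o\bigl((R^2-|\boldsymbol z|^2)^{-1}\bigr)$.

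The decisive idea is to use the self-adjointness of $(-\Delta_\Gamma)^\gamma$ to transfer the entire operator onto the fixed, smooth factor. Writing $\boldsymbol v:=\nabla_{\boldsymbol x}G(\cdot,\boldsymbol y)\times\boldsymbol\alpha$ and $\boldsymbol u_{\boldsymbol z}:=\nabla_{\boldsymbol x}G(\cdot,\boldsymbol z)\times\boldsymbol\beta$, I would rewrite the numerator as
\[
\langle\boldsymbol v,\boldsymbol u_{\boldsymbol z}\rangle_\gamma=\int_\Gamma\boldsymbol v\cdot\overline{(-\Delta_\Gamma)^\gamma\boldsymbol u_{\boldsymbol z}}\,d\boldsymbol x=\int_\Gamma\bigl((-\Delta_\Gamma)^\gamma\boldsymbol v\bigr)\cdot\overline{\boldsymbol u_{\boldsymbol z}}\,d\boldsymbol x,
\]
which is legitimate for each fixed $\boldsymbol z\in B_R$ since $\boldsymbol u_{\boldsymbol z}$ is then smooth on $\Gamma$. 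Because $\boldsymbol y$ is a fixed interior point, $\boldsymbol v$ and hence $(-\Delta_\Gamma)^\gamma\boldsymbol v$ are fixed bounded functions on the compact surface $\Gamma$. Applying H\"older's inequality in the asymmetric $L^\infty$--$L^1$ pairing, rather than the symmetric Cauchy--Schwarz estimate used in the preceding boundedness proposition, yields
\[
\bigl|\langle\boldsymbol v,\boldsymbol u_{\boldsymbol z}\rangle_\gamma\bigr|\le\|(-\Delta_\Gamma)^\gamma\boldsymbol v\|_{L^\infty(\Gamma)}\,\|\boldsymbol u_{\boldsymbol z}\|_{L^1(\Gamma)}.
\]
Since $|\boldsymbol u_{\boldsymbol z}|=|\nabla_{\boldsymbol x}G(\cdot,\boldsymbol z)\times\boldsymbol\beta|\le|\nabla_{\boldsymbol x}G(\cdot,\boldsymbol z)|$ for every unit vector $\boldsymbol\beta$, the factor $\|\boldsymbol u_{\boldsymbol z}\|_{L^1(\Gamma)}$ is dominated by the $\boldsymbol\beta$-free quantity $\|\nabla_{\boldsymbol x}G(\cdot,\boldsymbol z)\|_{L^1(\Gamma)}$.

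It then remains to estimate this $L^1$ norm, which I would compute explicitly in the same spirit as the exact integrals of Lemma~\ref{lemma1}. Aligning $\boldsymbol z$ with the polar axis and using $|\boldsymbol x-\boldsymbol z|^2=R^2+|\boldsymbol z|^2-2R|\boldsymbol z|\cos\theta$,
\[
\|\nabla_{\boldsymbol x}G(\cdot,\boldsymbol z)\|_{L^1(\Gamma)}=\frac1{4\pi}\int_\Gamma\frac{d\boldsymbol x}{|\boldsymbol x-\boldsymbol z|^2}=\frac{R}{2|\boldsymbol z|}\ln\frac{R+|\boldsymbol z|}{R-|\boldsymbol z|}=O\bigl(|\ln(R^2-|\boldsymbol z|^2)|\bigr)
\]
as $|\boldsymbol z|\to R$. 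This logarithmic growth is far weaker than the algebraic blow-up $(R^2-|\boldsymbol z|^2)^{-1}$ of the denominator. Combining the three estimates gives
\[
|K_{(\boldsymbol y,\boldsymbol\alpha)}(\boldsymbol z,\boldsymbol\beta)|=\frac{|\langle\boldsymbol v,\boldsymbol u_{\boldsymbol z}\rangle_\gamma|}{|\boldsymbol u_{\boldsymbol z}|_\gamma}\le\frac{\|(-\Delta_\Gamma)^\gamma\boldsymbol v\|_{L^\infty(\Gamma)}}{c}\,\|\nabla_{\boldsymbol x}G(\cdot,\boldsymbol z)\|_{L^1(\Gamma)}\,(R^2-|\boldsymbol z|^2)\le C\,(R^2-|\boldsymbol z|^2)\bigl|\ln(R^2-|\boldsymbol z|^2)\bigr|,
\]
which tends to $0$ as $|\boldsymbol z|\to R$. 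Every factor on the right is independent of $\boldsymbol\beta$, so the convergence is uniform over $\boldsymbol\beta\in S$, as required.

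The main obstacle is conceptual rather than computational: the symmetric Cauchy--Schwarz bound used earlier only shows that $K$ is bounded and cannot detect any decay. The decay is genuinely a consequence of the asymmetry between the two factors --- one fixed and smooth, the other singular and concentrating near $\Gamma$ --- and the essential trick is to load the full operator $(-\Delta_\Gamma)^\gamma$ onto the smooth factor so that the singular factor survives only in the weak $L^1$ norm, whose growth is a mere logarithm. The secondary point requiring attention is uniformity in $\boldsymbol\beta$; but since both the $L^1$ bound and the lower bound from Lemma~\ref{lemma2} are manifestly $\boldsymbol\beta$-free, uniformity follows automatically.
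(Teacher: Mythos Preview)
Your proof is correct and is essentially the same as the paper's: both move $(-\Delta_\Gamma)^\gamma$ onto the fixed smooth factor by self-adjointness, bound the result by an $L^\infty$--$L^1$ estimate, compute the surviving integral $\int_\Gamma|\boldsymbol x-\boldsymbol z|^{-2}\,d\boldsymbol x$ as a logarithm, and combine with Lemma~\ref{lemma2} to obtain $|K|=O\bigl((R^2-|\boldsymbol z|^2)\ln\frac{1}{R-|\boldsymbol z|}\bigr)$. Your write-up is slightly more explicit about why uniformity in $\boldsymbol\beta$ holds, but the argument is the same.
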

\begin{proof}
    Since the operator $(-\Delta_\Gamma)$ is self-adjoint, we have the following estimation for the numerator of $K$, 
    \begin{equation*}
        \begin{aligned}
            &\left| \langle \nabla_{\boldsymbol x}G(\cdot, \boldsymbol y)\times\boldsymbol\alpha, \nabla_{\boldsymbol x}G(\cdot, \boldsymbol z)\times\boldsymbol\beta \rangle_{\gamma} \right|\\
            =&\left|\int_\Gamma\left((-\Delta_\Gamma)^{\gamma}\left(-\frac{\boldsymbol{x}-\boldsymbol y}{4\pi|\boldsymbol{x}-\boldsymbol y|^3}\right)\times\boldsymbol{\alpha}\right)\cdot\left(-\frac{\boldsymbol{x}-\boldsymbol{z}}{4\pi|\boldsymbol{x}-\boldsymbol{z}|^3}\times\bar{\boldsymbol{\beta}}\right)d\boldsymbol{x}\right|\\
            \le&\,C_1\int_\Gamma\left|-\frac{\boldsymbol{x}-\boldsymbol{z}}{4\pi|\boldsymbol{x}-\boldsymbol{z}|^3}\times\bar{\boldsymbol{\beta}}\right|d\boldsymbol{x}
            \le C_1\int_\Gamma\frac1{|\boldsymbol{x}-\boldsymbol{z}|^2}d\boldsymbol{x}\\
            =&\frac{C_1}{R|\boldsymbol{z}|}\ln\left(\frac{R+|\boldsymbol{z}|}{R-|\boldsymbol{z}|}\right).
        \end{aligned}
    \end{equation*}
    Combined with Lemma \ref{lemma2} we get 
    \begin{equation*}
        \left|K_{(\boldsymbol{y},\boldsymbol{\alpha})}(\boldsymbol{z},\boldsymbol{\beta})\right|=O\left(\left(R^2-|\boldsymbol{z}|^2\right)\ln\left(\frac{1}{R-|\boldsymbol{z}|}\right)\right)=o(1),\quad |\boldsymbol{z}|\to R,
    \end{equation*}
    thus completing the proof.
\end{proof}


To illustrate the aforementioned property in Theorem \ref{thm:decay} much more clearly, we now consider a specific case and derive an explicit estimate for the function $K$. 

\begin{prop}\label{prop:decay}
Let $\gamma = 0$ and $\boldsymbol{y} = 0$. There exist constants $c, C>0$ such that
\begin{equation*}
    c\frac{|\boldsymbol{\alpha}\cdot\bar{\boldsymbol{\beta}}|}{\sqrt{D_0}}\le \left|K_{(\boldsymbol 0, \boldsymbol \alpha)}(\boldsymbol z, \boldsymbol\beta)\right|\le C\frac{|\boldsymbol{\alpha}\cdot\bar{\boldsymbol{\beta}}|}{\sqrt{D_0}}
\end{equation*}
with $D_0=\frac{R^2}{4\pi\left(R^2-|\boldsymbol{z}|^2\right)^2}$. 
\end{prop}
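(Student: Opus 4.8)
The plan is to make $K$ fully explicit for this special case and thereby reduce the two-sided bound to a comparison of the denominator with $\sqrt{D_0}$. With $\gamma=0$ the duality product degenerates to the plain $L^2(\Gamma)$ inner product, and with $\boldsymbol y=\boldsymbol 0$ we have $\nabla_{\boldsymbol x}G(\boldsymbol x,\boldsymbol 0)=-\boldsymbol x/(4\pi R^3)$ on $\Gamma$ (since $|\boldsymbol x|=R$ there). Thus
\begin{equation*}
K_{(\boldsymbol 0,\boldsymbol\alpha)}(\boldsymbol z,\boldsymbol\beta)=\frac{\int_\Gamma\bigl(-\tfrac{\boldsymbol x}{4\pi R^3}\times\boldsymbol\alpha\bigr)\cdot\bigl(\nabla_{\boldsymbol x}G(\boldsymbol x,\boldsymbol z)\times\bar{\boldsymbol\beta}\bigr)\,d\boldsymbol x}{|\nabla_{\boldsymbol x}G(\cdot,\boldsymbol z)\times\boldsymbol\beta|_0},
\end{equation*}
and I would evaluate the numerator exactly while bounding the denominator above and below by multiples of $\sqrt{D_0}$.

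For the numerator, apply the identity $(\boldsymbol a\times\boldsymbol b)\cdot(\boldsymbol c\times\boldsymbol d)=(\boldsymbol a\cdot\boldsymbol c)(\boldsymbol b\cdot\boldsymbol d)-(\boldsymbol a\cdot\boldsymbol d)(\boldsymbol b\cdot\boldsymbol c)$ with $\boldsymbol a=\boldsymbol x$ and $\boldsymbol c=\boldsymbol x-\boldsymbol z$, splitting the integral into a diagonal part carrying $\boldsymbol\alpha\cdot\bar{\boldsymbol\beta}$ and a cross part. The resulting scalar surface integrals are evaluated by potential theory on the sphere: the single-layer identity $\int_\Gamma|\boldsymbol x-\boldsymbol z|^{-1}d\boldsymbol x=4\pi R$, the Poisson-kernel normalization $\int_\Gamma|\boldsymbol x-\boldsymbol z|^{-3}d\boldsymbol x=4\pi R/(R^2-|\boldsymbol z|^2)$, together with its first and second moments $\int_\Gamma x_k|\boldsymbol x-\boldsymbol z|^{-3}d\boldsymbol x$ and $\int_\Gamma x_ix_j|\boldsymbol x-\boldsymbol z|^{-3}d\boldsymbol x$, all obtained by evaluating the harmonic extensions of $x_k$ and of $x_ix_j$ at $\boldsymbol z$ (for $i=j$ one first subtracts $\tfrac13|\boldsymbol x|^2$ to harmonize). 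The crucial point is that every $\boldsymbol z$-dependent contribution cancels, leaving the numerator equal to the constant $\dfrac{\boldsymbol\alpha\cdot\bar{\boldsymbol\beta}}{6\pi R^2}$, independent of $\boldsymbol z$; this can be sanity-checked directly at $\boldsymbol z=\boldsymbol 0$.

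For the denominator I would reuse the orthogonality established in the proof of Lemma \ref{lemma2}: since $\int_\Gamma g_ig_j\,d\boldsymbol x=0$ for $i\ne j$, setting $\gamma=0$ gives $|\nabla_{\boldsymbol x}G(\cdot,\boldsymbol z)\times\boldsymbol\beta|_0^2=\sum_{i=1}^3(1-|\beta_i|^2)\int_\Gamma g_i^2\,d\boldsymbol x$. The explicit formulas for $\int_\Gamma g_i^2\,d\boldsymbol x$ from Lemma \ref{lemma1} show that each ratio $\int_\Gamma g_i^2\,d\boldsymbol x/D_0$ is a continuous function of $t=|\boldsymbol z|/R\in[0,1]$ with finite positive endpoint values (tending to $\tfrac13$ as $t\to 0$, and to $\tfrac12$ for $g_1$ and $\tfrac14$ for $g_2,g_3$ as $t\to 1$), hence is bounded between positive constants $m\le M$ uniformly in $\boldsymbol z$. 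Because $\sum_{i}(1-|\beta_i|^2)=2$ for every $\boldsymbol\beta\in S$, this yields $2mD_0\le|\nabla_{\boldsymbol x}G(\cdot,\boldsymbol z)\times\boldsymbol\beta|_0^2\le 2MD_0$ uniformly in $\boldsymbol z$ and $\boldsymbol\beta$.

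Combining the two ingredients, $|K_{(\boldsymbol 0,\boldsymbol\alpha)}(\boldsymbol z,\boldsymbol\beta)|=\dfrac{|\boldsymbol\alpha\cdot\bar{\boldsymbol\beta}|}{6\pi R^2\,|\nabla_{\boldsymbol x}G(\cdot,\boldsymbol z)\times\boldsymbol\beta|_0}$ immediately gives the claimed estimate with $c=(6\pi R^2\sqrt{2M})^{-1}$ and $C=(6\pi R^2\sqrt{2m})^{-1}$. I expect the main obstacle to be the exact evaluation of the numerator — in particular verifying the cancellation of the $\boldsymbol z$-dependent terms in the cross part, which hinges on correctly identifying the harmonic extension of the quadratic moments $x_ix_j$. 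The denominator estimate is then a routine check that the Lemma \ref{lemma1} expressions stay comparable to $D_0$ across the whole range $t\in[0,1]$.
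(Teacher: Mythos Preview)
Your proposal is correct and follows essentially the same strategy as the paper: compute the numerator exactly to obtain $\frac{1}{6\pi R^2}(\boldsymbol\alpha\cdot\bar{\boldsymbol\beta})$ via the cross-product identity, then diagonalize the denominator as $\sum_i(1-|\beta_i|^2)\int_\Gamma g_i^2$ and bound each $\int_\Gamma g_i^2/D_0$ uniformly in $|\boldsymbol z|$. The only noteworthy variation is your evaluation of the numerator integrals through the Poisson-kernel/harmonic-extension identities rather than direct spherical integration as in the paper; this nicely explains \emph{why} the $\boldsymbol z$-dependence cancels (the matrix $\int_\Gamma x_i(x_j-z_j)|\boldsymbol x-\boldsymbol z|^{-3}d\boldsymbol x$ collapses to $\tfrac{4\pi R}{3}\delta_{ij}$), whereas the paper simply records the outcome $\frac{1}{12\pi R^2}I$. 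For the denominator the paper states the sharper pointwise bounds $\tfrac13\le\int g_1^2/D_0\le\tfrac12$ and $\tfrac14\le\int g_{2,3}^2/D_0\le\tfrac13$, which are in fact the extremal values attained at $t=0$ and $t=1$; your continuity-plus-endpoint argument yields the same conclusion with unspecified $m,M$, which is all the proposition requires.
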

\begin{proof}
Consider the specific case where $\gamma=0$ and $\boldsymbol{y}=0$. We compute $K_{(\boldsymbol 0, \boldsymbol \alpha)}(\boldsymbol z, \boldsymbol\beta)$ to demonstrate its decay property. The denominator is given by 
\begin{equation}\label{ex1:deno}
\begin{aligned}
    |\nabla_{\boldsymbol{x}} G(\cdot,\boldsymbol{z})\times\boldsymbol{\beta}|_0^2=&\int_\Gamma\left|-\frac{\boldsymbol{x}-\boldsymbol{z}}{4\pi|\boldsymbol{x}-\boldsymbol{z}|^3}\times\boldsymbol{\beta}\right|^2d\boldsymbol{x}\\
    =&\left(\int_\Gamma\frac1{16\pi^2}\frac1{|\boldsymbol{x}-\boldsymbol{z}|^4}d\boldsymbol{x}\right)(\boldsymbol{\beta}\cdot\bar{\boldsymbol{\beta}})-\boldsymbol{\beta}^T\left(\int_\Gamma\frac{(\boldsymbol{x}-\boldsymbol{z})(\boldsymbol{x}-\boldsymbol{z})^T}{16\pi^2|\boldsymbol{x}-\boldsymbol{z}|^4}d\boldsymbol{x}\right)\bar{\boldsymbol{\beta}}\\
    =&\frac{R^2}{4\pi \left(R^2-|\boldsymbol{z}|^2\right)^2}-\boldsymbol{\beta}^T\left(\int_\Gamma\frac{(\boldsymbol{x}-\boldsymbol{z})(\boldsymbol{x}-\boldsymbol{z})^T}{16\pi^2|\boldsymbol{x}-\boldsymbol{z}|^4}d\boldsymbol{x}\right)\bar{\boldsymbol{\beta}}.
\end{aligned}
\end{equation}
To simplify the integral, we align $\boldsymbol{z}$ with the $\boldsymbol{e_1}$ axis of the Cartesian coordinate system, such that the $\boldsymbol{e_2}O\boldsymbol{e_3}$ plane is perpendicular to $\boldsymbol{z}$. In this coordinate system, the integral above can be expressed in matrix form as
\begin{equation*}
\begin{aligned}
    &|\nabla_{\boldsymbol{x}} G(\cdot,\boldsymbol{z})\times\boldsymbol{\beta}|_0^2=\frac{R^2}{4\pi \left(R^2-|\boldsymbol{z}|^2\right)^2}-\frac{R}{32\pi|\boldsymbol{z}|^3}\boldsymbol{\beta}^T
    \begin{bmatrix}
    d_1&0&0\\0&d_2&0\\0&0&d_3
    \end{bmatrix}
    \bar{\boldsymbol{\beta}},
\end{aligned}
\end{equation*}
where
\begin{equation*}
    \begin{aligned}
    d_1=&\,\ln\left(\tfrac{R+|\boldsymbol{z}|}{ R-|\boldsymbol{z}|}\right)-\tfrac{4R|\boldsymbol{z}|}{R^2-|\boldsymbol{z}|^2}+\tfrac{2R|\boldsymbol{z}|\left(R^2+|\boldsymbol{z}|^2\right)}{\left(R^2-|\boldsymbol{z}|^2\right)^2},\\
    d_2=&\,d_3=\tfrac12\ln\left(\tfrac{R-|\boldsymbol{z}|}{ R+|\boldsymbol{z}|}\right)+\tfrac{R|\boldsymbol{z}|\left(R^2+|\boldsymbol{z}|^2\right)}{\left(R^2-|\boldsymbol{z}|^2\right)^2}.
    \end{aligned}
\end{equation*}
Note that the entries in the diagonal matrix are all bounded by the domination term $D_0=\frac{R^2}{4\pi\left(R^2-|\boldsymbol{z}|^2\right)^2}$, i.e., 
\begin{equation*}
    \begin{aligned}
        \frac13 D_0
        \le\frac{R}{32\pi|\boldsymbol{z}|^3}d_1
        \le\frac12 D_0,\quad
        \frac14 D_0
        \le\frac{R}{32\pi|\boldsymbol{z}|^3}d_{2,3}
        \le\frac13 D_0.
    \end{aligned}
\end{equation*}
Consequently, for all $\boldsymbol{\beta}\in S$, the denominator \eqref{ex1:deno} of $K$ is uniformly bounded by
\begin{equation}\label{ex1:deno_bound}
    \frac12\frac{R^2}{4\pi\left(R^2-|\boldsymbol{z}|^2\right)^2}
    \le|\nabla_{\boldsymbol{x}} G(\cdot,\boldsymbol{z})\times\boldsymbol{\beta}|_0^2
    \le\frac23\frac{R^2}{4\pi\left(R^2-|\boldsymbol{z}|^2\right)^2}.
\end{equation}
Meanwhile, the numerator is computed as
\begin{equation}\label{ex1:nume}
    \begin{aligned}
        \langle \nabla_{\boldsymbol x}G&(\cdot, \boldsymbol 0)\times\boldsymbol\alpha, \nabla_{\boldsymbol x}G(\cdot, \boldsymbol z)\times\boldsymbol\beta \rangle_0\\
        =&\int_\Gamma\left(-\frac{\boldsymbol{x}}{4\pi|\boldsymbol{x}|^3}\times\boldsymbol{\alpha}\right)\cdot\left(-\frac{\boldsymbol{x}-\boldsymbol{z}}{4\pi|\boldsymbol{x}-\boldsymbol{z}|^3}\times\bar{\boldsymbol{\beta}}\right)\\
        =&\left(\frac1{16\pi^2}\int_\Gamma\frac{\boldsymbol{x}\cdot(\boldsymbol{x}-\boldsymbol{z})}{|\boldsymbol{x}|^3|\boldsymbol{x-z}|^3}d\boldsymbol{x}\right)(\boldsymbol{\alpha}\cdot\bar{\boldsymbol{\beta}})-\boldsymbol{\alpha}^T\left(\int_\Gamma\frac{\boldsymbol{x}(\boldsymbol{x}-\boldsymbol{z})^T}{16\pi^2|\boldsymbol{x}|^3|\boldsymbol{x-z}|^3}d\boldsymbol{x}\right)\bar{\boldsymbol{\beta}}\\
        =&\frac{1}{4\pi R^2}(\boldsymbol{\alpha}\cdot\bar{\boldsymbol{\beta}})-\boldsymbol{\alpha}^T\left(\frac{1}{12\pi R^2}I\right)\bar{\boldsymbol{\beta}}\\
        =&\frac{1}{6\pi R^2}(\boldsymbol{\alpha}\cdot\bar{\boldsymbol{\beta}}).
    \end{aligned}
\end{equation}
Therefore, by \eqref{ex1:deno_bound} and \eqref{ex1:nume} we obtain the following bounds for $K_{(\boldsymbol 0, \boldsymbol \alpha)}(\boldsymbol z, \boldsymbol\beta)$: 
\begin{equation*}\label{ex1:K}
    \sqrt{\frac32}\frac{|\boldsymbol{\alpha}\cdot\bar{\boldsymbol{\beta}}|}{6\pi R^2\sqrt{D_0}}\le\left|K_{(\boldsymbol 0, \boldsymbol \alpha)}(\boldsymbol z, \boldsymbol\beta)\right|\le\sqrt{2}\frac{|\boldsymbol{\alpha}\cdot\bar{\boldsymbol{\beta}}|}{6\pi R^2\sqrt{D_0}}.
\end{equation*}
This shows that $K_{(\boldsymbol 0, \boldsymbol \alpha)}(\boldsymbol z, \boldsymbol\beta)$ scales as $D_0^{-1/2}$ and vanishes for all $\boldsymbol{\beta}\in S$ as $|\boldsymbol{z}|\to R$.
\end{proof}

\subsection{Influence of parameter $\gamma$}

We observe that a larger $\gamma$ will give a sharper peak of $K_{(\boldsymbol y,\boldsymbol\alpha)}$ at $(\boldsymbol z,\boldsymbol\beta)=(\boldsymbol y,\boldsymbol \alpha)$ (see Figure \ref{fig}), where the main contribution of the function $K_{(\boldsymbol y,\boldsymbol\alpha)}$ concentrates in the neighborhood of $\boldsymbol y$ and only takes negligibly small values in more distant regions.

\begin{figure}[htbp]
\centering
    \begin{subfigure}{0.42\linewidth}
        \centering
        \includegraphics[width=0.92\linewidth]{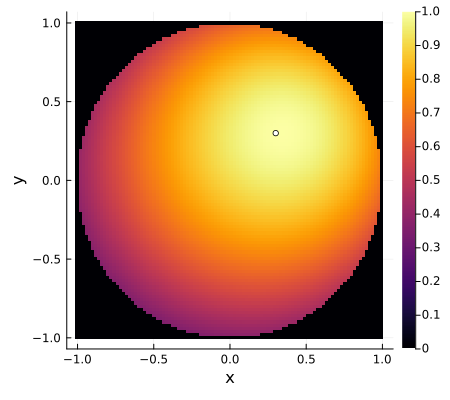}
    \end{subfigure}
    \centering
    \begin{subfigure}{0.42\linewidth}
        \centering
        \includegraphics[width=0.92\linewidth]{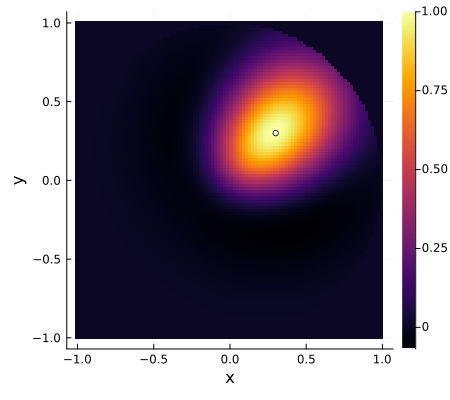}
    \end{subfigure}
\caption{Values of $K_{(\boldsymbol y, \boldsymbol \alpha)}(\cdot, \boldsymbol\beta)$ on cross-section $\{x^2+y^2\le1,\,z=0\}$ with different $\gamma$s. Left: $\gamma=0$. Right: $\gamma=4$. In both cases $\boldsymbol y=(0.3, 0.3, 0.0)$ and $\boldsymbol\alpha=\boldsymbol\beta=(1,0,0)$.}
\label{fig}
\end{figure}

Next, we present a specific case to illustrate the influence of the parameter $\gamma$ on the function $K$. The following result should be compared to Proposition  \ref{prop:decay}.

\begin{prop}\label{prop:decay2}
Let $\gamma = 2$ and $\boldsymbol{y} = 0$. There exist constants $c, C>0$ such that
\begin{equation*}
    c\frac{|\boldsymbol{\alpha}\cdot\bar{\boldsymbol{\beta}}|}{\sqrt{D_2}}\le \left|K_{(\boldsymbol 0, \boldsymbol \alpha)}(\boldsymbol z, \boldsymbol\beta)\right|\le C\frac{|\boldsymbol{\alpha}\cdot\bar{\boldsymbol{\beta}}|}{\sqrt{D_2}}
\end{equation*}
with $D_2=\frac{R^6+12R^4|\boldsymbol{z}|^2+15R^2|\boldsymbol{z}|^4+2|\boldsymbol{z}|^6}{\pi \left(R^2-|\boldsymbol{z}|^2\right)^6}$. 
\end{prop}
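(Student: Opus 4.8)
The plan is to follow the same architecture as the proof of Proposition \ref{prop:decay}: I evaluate the numerator and the squared denominator of $K_{(\boldsymbol 0,\boldsymbol\alpha)}(\boldsymbol z,\boldsymbol\beta)$ separately, show that the numerator is a fixed constant times $\boldsymbol\alpha\cdot\bar{\boldsymbol\beta}$ while the denominator is comparable to $D_2$ uniformly in $\boldsymbol z$ and $\boldsymbol\beta$, and then divide. The numerator will be cheap to obtain by reusing the $\gamma=0$ computation \eqref{ex1:nume}; the genuine work is the explicit evaluation of the $\gamma=2$ denominator, which is where I expect the main obstacle.

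For the numerator, the key observation is that $\nabla_{\boldsymbol x}G(\boldsymbol x,\boldsymbol 0)=-\boldsymbol x/(4\pi|\boldsymbol x|^3)$ restricted to $\Gamma=\partial B_R$ has each component proportional to $x_i/R^3$, i.e.\ a degree-one spherical harmonic. Hence each component, and every constant-coefficient combination such as the components of $\nabla_{\boldsymbol x}G(\cdot,\boldsymbol 0)\times\boldsymbol\alpha$, is an eigenfunction of $-\Delta_\Gamma$ with eigenvalue $1\cdot(1+1)/R^2=2/R^2$. Moving both factors of $(-\Delta_\Gamma)$ onto the first argument by self-adjointness, I obtain
\begin{equation*}
\langle \nabla_{\boldsymbol x}G(\cdot,\boldsymbol 0)\times\boldsymbol\alpha,\nabla_{\boldsymbol x}G(\cdot,\boldsymbol z)\times\boldsymbol\beta\rangle_2
=\frac{4}{R^4}\langle \nabla_{\boldsymbol x}G(\cdot,\boldsymbol 0)\times\boldsymbol\alpha,\nabla_{\boldsymbol x}G(\cdot,\boldsymbol z)\times\boldsymbol\beta\rangle_0
=\frac{4}{R^4}\cdot\frac{1}{6\pi R^2}(\boldsymbol\alpha\cdot\bar{\boldsymbol\beta}),
\end{equation*}
the last equality being exactly \eqref{ex1:nume}. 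Thus the numerator equals $\tfrac{2}{3\pi R^6}(\boldsymbol\alpha\cdot\bar{\boldsymbol\beta})$.

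For the denominator I first reduce it precisely as in Lemma \ref{lemma2}: the rotation-invariance/odd-parity argument there annihilates all off-diagonal terms, leaving
\begin{equation*}
|\nabla_{\boldsymbol x}G(\cdot,\boldsymbol z)\times\boldsymbol\beta|_2^2
=\sum_{i=1}^3\bigl(1-|\beta_i|^2\bigr)\,A_i,\qquad
A_i:=\int_\Gamma\bigl((-\Delta_\Gamma)g_i\bigr)^2\,d\boldsymbol x,
\end{equation*}
with $g_i$ as in \eqref{gi} and weights satisfying $\sum_i(1-|\beta_i|^2)=2$. The crucial simplification is that each $g_i$ is harmonic away from $\boldsymbol z$ and $\boldsymbol z\in B_R$ lies off $\Gamma$, so on $\Gamma$ the Laplace--Beltrami operator collapses to a purely radial one: $(-\Delta_\Gamma)g_i=(\partial_r^2+\tfrac2r\partial_r)g_i$ at $r=R$. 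I would compute these radial derivatives explicitly (rational functions in $R$, $|\boldsymbol z|$ and $\cos\theta$ with $|\boldsymbol x-\boldsymbol z|$ to negative powers), square them, and integrate over $\Gamma$ using the axial symmetry about $\boldsymbol z=|\boldsymbol z|\boldsymbol e_1$ and the substitution $t=\cos\theta$. This yields each $A_i$ in closed form; by the $x_2\leftrightarrow x_3$ symmetry $A_2=A_3$, and the computation should produce $A_1+2A_2=D_2$, consistent with the stated $D_2$ (the endpoint $\boldsymbol z=\boldsymbol 0$ is an immediate check, where every component is the eigenfunction above and $A_1=A_2=A_3=\tfrac13 D_2=\tfrac{1}{3\pi R^6}$).

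Finally I combine. Since the weights are nonnegative and sum to $2$, the squared denominator satisfies $2\min_i A_i\le|\nabla_{\boldsymbol x}G(\cdot,\boldsymbol z)\times\boldsymbol\beta|_2^2\le 2\max_i A_i$ for every $\boldsymbol\beta\in S$; as $A_i\le A_1+2A_2=D_2$ holds automatically, what remains is the lower bound $A_i\ge c'D_2$, i.e.\ that the ratio $A_1/A_2$ stays between positive constants for all $\boldsymbol z\in B_R$. Granting this, the squared denominator is comparable to $D_2$ uniformly in $\boldsymbol\beta$ and $\boldsymbol z$, so dividing the numerator $\tfrac{2}{3\pi R^6}|\boldsymbol\alpha\cdot\bar{\boldsymbol\beta}|$ by its square root gives the claimed two-sided bound, with the $R$-dependent prefactors absorbed into $c$ and $C$. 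The main obstacle is exactly this explicit denominator computation: applying the second-order radial operator to the $g_i$ and carrying out the resulting one-dimensional integrals to reproduce the numerator polynomial $R^6+12R^4|\boldsymbol z|^2+15R^2|\boldsymbol z|^4+2|\boldsymbol z|^6$ and to verify the uniform comparability $A_1\asymp A_2\asymp D_2$ is the bookkeeping-heavy heart of the argument.
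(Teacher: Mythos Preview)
Your plan is correct and matches the paper's proof in structure: both compute the numerator exactly as $\tfrac{2}{3\pi R^6}(\boldsymbol\alpha\cdot\bar{\boldsymbol\beta})$, reduce the squared denominator to the diagonal form $\sum_i(1-|\beta_i|^2)A_i$, and then bound each $A_i$ above and below by a constant times $D_2$. Your eigenfunction shortcut for the numerator---observing that $\nabla_{\boldsymbol x}G(\cdot,\boldsymbol 0)|_\Gamma$ lies in the $\ell=1$ eigenspace so that $\langle\cdot,\cdot\rangle_2=(2/R^2)^2\langle\cdot,\cdot\rangle_0$ and then quoting \eqref{ex1:nume}---is a genuine simplification over the paper, which instead computes $(-\Delta_\Gamma)$ applied to each component of $\nabla_{\boldsymbol x}G(\cdot,\boldsymbol z)$ explicitly and integrates directly. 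For the denominator the paper does exactly the explicit one-variable integration you anticipate, obtaining closed forms for $A_1$ and $A_2=A_3$ and the concrete bounds $\tfrac13 D_2\le A_1\le\tfrac12 D_2$, $\tfrac14 D_2\le A_{2}\le\tfrac13 D_2$; this confirms the uniform comparability $A_i\asymp D_2$ you flag as the remaining work.
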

\begin{proof}
Consider the specific case where $\gamma=2$ and $\boldsymbol{y}=0$. Similar to the proof of Proposition \ref{prop:decay}, we choose the direction of $\boldsymbol{z}$ as the $e_1$ axis and use the spherical coordinates $\boldsymbol{x}=(r\cos\theta, r\sin\theta\cos\phi, r\sin\theta\sin\phi)$. In this case, we have
\begin{eqnarray*}
        &&(-\Delta_\Gamma)\frac{\boldsymbol{x}}{|\boldsymbol{x}|^3}=\frac{\boldsymbol{x}}{|\boldsymbol{x}|^5},\\
        &&(-\Delta_\Gamma)\frac{x_1-z_1}{|\boldsymbol{x}-\boldsymbol{z}|^3}=\left(\left(9|\boldsymbol{x}||\boldsymbol{z}|^3-6|\boldsymbol{x}|^3|\boldsymbol{z}|\right)+\left(2|\boldsymbol{x}|^4-5|\boldsymbol{x}|^2|\boldsymbol{z}|^2-4|\boldsymbol{z}|^4\right)\cos\theta\frac{}{}\right.\\
        &&\quad\left.\,\frac{}{}+\left(|\boldsymbol{x}||\boldsymbol{z}|^3+4|\boldsymbol{x}|^3|\boldsymbol{z}|\right)\cos^2\theta-|\boldsymbol{x}|^2|\boldsymbol{z}|^2\cos^3\theta\right)/\left({|\boldsymbol{x}|\left(|\boldsymbol{x}|^2+|\boldsymbol{z}|^2-2|\boldsymbol{x}||\boldsymbol{z}|\cos\theta\right)^{7/2}}\right),\\
        &&(-\Delta_\Gamma)\frac{x_2}{|\boldsymbol{x}-\boldsymbol{z}|^3}=\left(\left(2|\boldsymbol{x}|^4-11|\boldsymbol{x}|^2|\boldsymbol{z}|^2+2|\boldsymbol{z}|^4\right)+4|\boldsymbol{x}||\boldsymbol{z}|\left(|\boldsymbol{x}|^2+|\boldsymbol{z}|^2\right)\cos\theta\frac{}{}\right.\\
        &&\quad\left.\,\frac{}{}-|\boldsymbol{x}|^2|\boldsymbol{z}|^2\cos^2\theta\right)\sin\theta\cos\phi\,/\left({|\boldsymbol{x}|\left(|\boldsymbol{x}|^2+|\boldsymbol{z}|^2-2|\boldsymbol{x}||\boldsymbol{z}|\cos\theta\right)^{7/2}}\right),\\
        &&(-\Delta_\Gamma)\frac{x_3}{|\boldsymbol{x}-\boldsymbol{z}|^3}=\left(\left(2|\boldsymbol{x}|^4-11|\boldsymbol{x}|^2|\boldsymbol{z}|^2+2|\boldsymbol{z}|^4\right)+4|\boldsymbol{x}||\boldsymbol{z}|\left(|\boldsymbol{x}|^2+|\boldsymbol{z}|^2\right)\cos\theta\frac{}{}\right.\\
        &&\quad\left.\,\frac{}{}-|\boldsymbol{x}|^2|\boldsymbol{z}|^2\cos^2\theta\right)\sin\theta\sin\phi\,/\left({|\boldsymbol{x}|\left(|\boldsymbol{x}|^2+|\boldsymbol{z}|^2-2|\boldsymbol{x}||\boldsymbol{z}|\cos\theta\right)^{7/2}}\right).
\end{eqnarray*}
The denominator of $K_{(\boldsymbol 0, \boldsymbol \alpha)}(\boldsymbol z, \boldsymbol\beta)$ is given by
\begin{equation}\label{ex2:deno}
\begin{aligned}
    &|\nabla_{\boldsymbol{x}} G(\cdot,\boldsymbol{z})\times\boldsymbol{\beta}|_2^2\\
    =&\int_\Gamma\left|\left(-\Delta_\Gamma\right)\left(-\frac{\boldsymbol{x}-\boldsymbol{z}}{4\pi|\boldsymbol{x}-\boldsymbol{z}|^3}\right)\times\boldsymbol{\beta}\right|^2d\boldsymbol{x}\\
    =&\frac{R^6+12R^4|\boldsymbol{z}|^2+15R^2|\boldsymbol{z}|^4+2|\boldsymbol{z}|^6}{\pi \left(R^2-|\boldsymbol{z}|^2\right)^6}-\frac{1}{512\pi|\boldsymbol{z}|^3R^3}\boldsymbol{\beta}^T
    \begin{bmatrix}
    d_1&0&0\\0&d_2&0\\0&0&d_3
    \end{bmatrix}
    \bar{\boldsymbol{\beta}},
\end{aligned}
\end{equation}
where
\begin{equation*}
    \begin{aligned}
    d_1=&\,\ln\left(\tfrac{R+|\boldsymbol{z}|}{ R-|\boldsymbol{z}|}\right)-\tfrac{2R|\boldsymbol{z}|\left(R^{10}-91R^8|\boldsymbol{z}|^2-1318R^6|\boldsymbol{z}|^4-2086R^4|\boldsymbol{z}|^6-347R^2|\boldsymbol{z}|^8+|\boldsymbol{z}|^{10}\right)}{\left(R^2-|\boldsymbol{z}|^2\right)^6},\\
    d_2=&\,d_3=\tfrac12\ln\left(\tfrac{R-|\boldsymbol{z}|}{ R+|\boldsymbol{z}|}\right)+\tfrac{R|\boldsymbol{z}|\left(R^2+|\boldsymbol{z}|^2\right)\left(R^8+164R^6|\boldsymbol{z}|^2+1590R^4|\boldsymbol{z}|^4+164R^2|\boldsymbol{z}|^6+|\boldsymbol{z}|^{8}\right)}{\left(R^2-|\boldsymbol{z}|^2\right)^6}.
\end{aligned}
\end{equation*}
Entries in the matrix are bounded by the domination term $D_2=\frac{R^6+12R^4|\boldsymbol{z}|^2+15R^2|\boldsymbol{z}|^4+2|\boldsymbol{z}|^6}{\pi \left(R^2-|\boldsymbol{z}|^2\right)^6}$, i.e., 
\begin{equation*}
    \begin{aligned}
        \frac13 D_2
        \le \frac{1}{512\pi|\boldsymbol{z}|^3R^3}d_1
        \le\frac12D_2,\quad
        \frac14D_2
        \le\frac{1}{512\pi|\boldsymbol{z}|^3R^3}d_{2,3}
        \le\frac13D_2.
    \end{aligned}
\end{equation*}
Consequently, for all $\boldsymbol{\beta}\in S$, the denominator \eqref{ex2:deno} of $K$ is uniformly bounded by
\begin{equation}\label{ex2:deno_bound}
    \frac12D_2
    \le|\nabla_{\boldsymbol{x}} G(\cdot,\boldsymbol{z})\times\boldsymbol{\beta}|_2^2
    \le\frac23D_2.
\end{equation}
Meanwhile, the numerator is computed as
\begin{equation}\label{ex2:nume}
    \begin{aligned}
        \langle \nabla_{\boldsymbol x}G&(\cdot, \boldsymbol 0)\times\boldsymbol\alpha, \nabla_{\boldsymbol x}G(\cdot, \boldsymbol z)\times\boldsymbol\beta \rangle_2\\
        =&\int_\Gamma\left(\left(-\Delta_\Gamma\right)\left(-\frac{\boldsymbol{x}}{4\pi|\boldsymbol{x}|^3}\right)\times\boldsymbol{\alpha}\right)\cdot\left(\left(-\Delta_\Gamma\right)\left(-\frac{\boldsymbol{x}-\boldsymbol{z}}{4\pi|\boldsymbol{x}-\boldsymbol{z}|^3}\right)\times\bar{\boldsymbol{\beta}}\right)\\
        =&\frac{1}{\pi R^6}(\boldsymbol{\alpha}\cdot\bar{\boldsymbol{\beta}})-\boldsymbol{\alpha}^T\left(\frac{1}{3\pi R^6}I\right)\bar{\boldsymbol{\beta}}\\
        =&\frac{2}{3\pi R^6}(\boldsymbol{\alpha}\cdot\bar{\boldsymbol{\beta}}).
    \end{aligned}
\end{equation}
Therefore, by \eqref{ex2:deno_bound} and \eqref{ex2:nume} we obtain the following bounds for $K_{(\boldsymbol 0, \boldsymbol \alpha)}(\boldsymbol z, \boldsymbol\beta)$: 
\begin{equation*}\label{ex2:K}
    \sqrt{\frac32}\frac{2|\boldsymbol{\alpha}\cdot\bar{\boldsymbol{\beta}}|}{3\pi R^2\sqrt{D_2}}\le\left|K_{(\boldsymbol 0, \boldsymbol \alpha)}(\boldsymbol z, \boldsymbol\beta)\right|\le\sqrt{2}\frac{2|\boldsymbol{\alpha}\cdot\bar{\boldsymbol{\beta}}|}{3\pi R^2\sqrt{D_2}}.
\end{equation*}
This shows that $K_{(\boldsymbol 0, \boldsymbol \alpha)}(\boldsymbol z, \boldsymbol\beta)$ is of the same growth rate as $D_2^{-1/2}$, and vanishes for all $\boldsymbol{\beta}\in S$ as $|\boldsymbol{z}|\to R$.
\end{proof}
Figure \ref{fig:gammas} shows the graphs of normalized $D_0^{-1/2}$ and $D_2^{-1/2}$, which demonstrate the growth rate of $K_{(\boldsymbol{0},\boldsymbol{\alpha})}(\boldsymbol{z},\boldsymbol{\beta})$ with different $\gamma$s in Proposition \ref{prop:decay} and \ref{prop:decay2}, i.e., $\gamma=0$ and $\gamma=2$, respectively. We can see that with a larger $\gamma$ the maximum value of $K$ will become more pronounced. This enables it to better locate the conductor area during the subsequent imaging process.

\begin{figure}[htbp]
\centering
\includegraphics[width=0.6\textwidth]{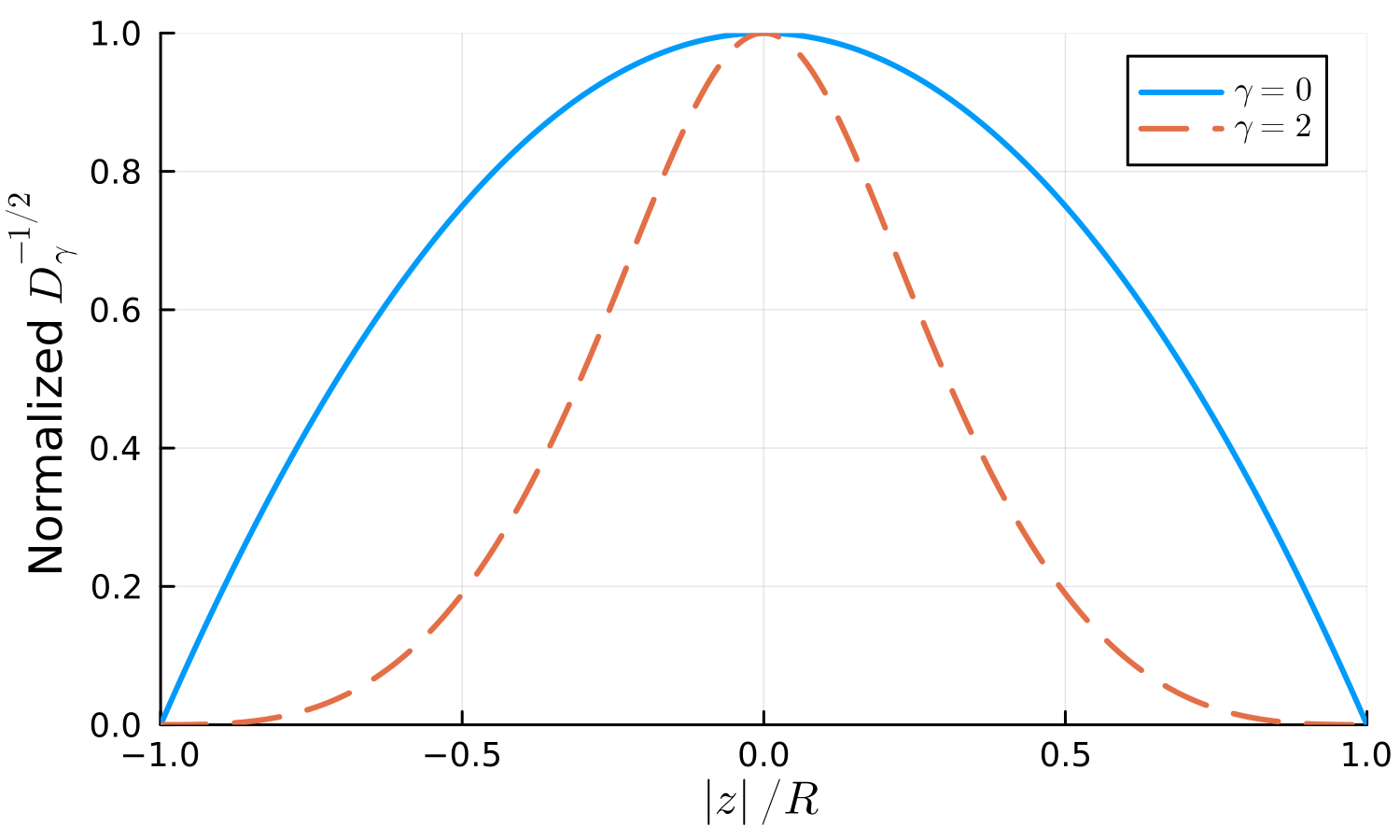}
\caption{Graphs of normalized $D_0^{-1/2}$ and $D_2^{-1/2}$. They show the growth rate of $K_{(\boldsymbol{0},\boldsymbol{\alpha})}(\boldsymbol{z},\boldsymbol{\beta})$ with different $\gamma$s, $\gamma=0$ and $\gamma=2$, respectively.}
\label{fig:gammas}
\end{figure}

\subsection{Main algorithm}

Recall that $\boldsymbol{H}^s(\boldsymbol{x})$ denotes the magnetic perturbation field in Theorem \ref{th:integral-Hs}. For all $\boldsymbol z\in \Omega$ and $\boldsymbol\beta\in S$, we define
\begin{equation}\label{eq:J}
    \begin{aligned}
        J(\boldsymbol z,\boldsymbol\beta):=&\frac{\left|\left\langle\boldsymbol H^s(\cdot), \nabla_{\boldsymbol x}G(\cdot,\boldsymbol z)\times\boldsymbol\beta \right\rangle_\gamma\right|}{|\nabla_{\boldsymbol x}G(\cdot,\boldsymbol z)\times\boldsymbol\beta |_\gamma}\\
        =&\cfrac{\left|\int_\Gamma \boldsymbol{H}^s(\boldsymbol{x})\cdot\left((-\Delta_\Gamma)^{\gamma}\left(-\frac{\boldsymbol x - \boldsymbol z}
        {4\pi|\boldsymbol x - \boldsymbol z|^3}\right)\times\bar{\boldsymbol{\beta}}\right)d\boldsymbol{x}\right|}{\left(\int_\Gamma\left|(-\Delta_\Gamma)^{\gamma/2}\left(-\frac{\boldsymbol x - \boldsymbol y}
        {4\pi|\boldsymbol x - \boldsymbol y|^3}\right)\times\boldsymbol{\beta}\right|^2d\boldsymbol{x}\right)^{1/2}}.
    \end{aligned}
\end{equation}
Using \eqref{eq:int} we get
\begin{equation}\label{eq:J2}
    \begin{aligned}
    J(\boldsymbol z,\boldsymbol\beta)=&\frac{\left|\left\langle\boldsymbol H^s(\cdot), \nabla_{\boldsymbol x}G(\cdot,\boldsymbol z)\times\boldsymbol\beta \right\rangle_\gamma\right|}{\left|\nabla_{\boldsymbol x}G(\cdot,\boldsymbol z)\times\boldsymbol \beta \right|_\gamma}\\
     =&\left|\int_{\boldsymbol y\in D}|(\sigma\boldsymbol E)(\boldsymbol y)|\,\frac{\left\langle\nabla_{\boldsymbol x}G(\cdot,\boldsymbol y)\times\widehat{\boldsymbol E}(\boldsymbol y), \nabla_{\boldsymbol x}G(\cdot,\boldsymbol z)\times\boldsymbol\beta \right\rangle_\gamma}{\left|\nabla_{\boldsymbol x}G(\cdot,\boldsymbol z)\times\boldsymbol\beta \right|_\gamma}\,d\boldsymbol y\right|\\
    \le&\int_{\boldsymbol y\in D}|(\sigma\boldsymbol E)(\boldsymbol y)|\,\frac{\left|\left\langle\nabla_{\boldsymbol x}G(\cdot,\boldsymbol y)\times\widehat{\boldsymbol E}(\boldsymbol y), \nabla_{\boldsymbol x}G(\cdot,\boldsymbol z)\times\boldsymbol\beta \right\rangle_\gamma\right|}{\left|\nabla_{\boldsymbol x}G(\cdot,\boldsymbol z)\times\boldsymbol\beta \right|_\gamma}\,d\boldsymbol y\\
    =&\int_{\boldsymbol y\in D}|(\sigma\boldsymbol E)(\boldsymbol y)|\left|K_{\left(\boldsymbol y,\widehat{\boldsymbol E}(\boldsymbol y)\right)}(\boldsymbol z, \boldsymbol\beta)\right|d\boldsymbol y.
    \end{aligned}
\end{equation}
By virtue of the estimates in the previous subsections and the expression \eqref{eq:J2}, we expect that the function $J$ can be used to indicate the conductive area $D$. When $\boldsymbol z$ lies outside the conductive region, the function $K_{(\boldsymbol y,\widehat{\boldsymbol E}(\boldsymbol y))}(\boldsymbol z, \boldsymbol\beta)$ consistently takes a small value, since $\boldsymbol z$ keeps a distance from all $\boldsymbol{y}\in D$, which shall give a small $J(\boldsymbol{z}, \boldsymbol{\beta})$ for all $\boldsymbol{\beta}\in S$. Furthermore, from Proposition \ref{prop:decay} we know that $J(\boldsymbol{z},\boldsymbol{\beta})$ uniformly tends to zero when $|\boldsymbol{z}|\to R$. These two properties ensure that the function $J$ always takes relatively small values outside the conductors. 

To better distinguish $D$ from $\Omega\setminus D$, we expect to choose appropriate $\boldsymbol\beta$s so that $J(\boldsymbol z,\boldsymbol\beta)$ takes relatively large values for $\boldsymbol z\in D$. A natural idea is to maximize $J(\boldsymbol z,\boldsymbol\beta)$, i.e., for every $\boldsymbol z\in B_R$ using the optimal $\boldsymbol\beta^*_{\boldsymbol z}$ such that
\begin{equation*}
    \boldsymbol\beta^*_{\boldsymbol z}=\arg\max_{\boldsymbol\beta\in S}J(\boldsymbol z,\boldsymbol\beta)=\arg\max_{\boldsymbol\beta\in S}\frac{\left|\langle\boldsymbol H^s(\cdot), \nabla_{\boldsymbol x}G(\cdot,\boldsymbol z)\times\boldsymbol\beta \rangle_\gamma\right|}{|\nabla_{\boldsymbol x}G(\cdot,\boldsymbol z)\times\boldsymbol\beta |_\gamma}.
\end{equation*}
However, this problem can be hard to solve. A simpler choice is to use
\begin{equation*}\label{eq:beta0}
    \boldsymbol\beta_{\boldsymbol z}=\arg\max_{\boldsymbol\beta\in S}\left|\langle\boldsymbol H^s(\cdot), \nabla_{\boldsymbol x}G(\cdot,\boldsymbol z)\times\boldsymbol\beta \rangle_\gamma\right|.
\end{equation*}
This $\boldsymbol\beta_{\boldsymbol z}$ can be calculated directly since
\begin{equation}\label{eq:beta}
    \begin{aligned}
    \boldsymbol\beta_{\boldsymbol z}=&\arg\max_{\boldsymbol\beta\in S}\left|\langle\boldsymbol H^s(\cdot), \nabla_{\boldsymbol x}G(\cdot,\boldsymbol z)\times\boldsymbol\beta \rangle_\gamma\right|\\
    =&\arg\max_{\boldsymbol\beta\in S}\left|\int_\Gamma \boldsymbol H^s(\boldsymbol x)\cdot\overline{(-\Delta_\Gamma)^\gamma\nabla_{\boldsymbol x}G(\boldsymbol{x},\boldsymbol z)\times\boldsymbol\beta}\,d\boldsymbol x\right|\\
    =&\arg\max_{\boldsymbol\beta\in S}\left|\,\overline{\boldsymbol \beta}\cdot\int_\Gamma\left(\boldsymbol H^s(\boldsymbol x)\times(-\Delta_\Gamma)^\gamma\nabla_{\boldsymbol x}G(\boldsymbol x,\boldsymbol z)\right)\,d\boldsymbol x\right|\\
    =&\left(\int_\Gamma\boldsymbol H^s(\boldsymbol x)\times(-\Delta_\Gamma)^\gamma\nabla_{\boldsymbol x}G(\boldsymbol x,\boldsymbol z)\,d\boldsymbol x\right)^\wedge,
    \end{aligned}
\end{equation}
where $(\cdot)^\wedge$ denotes vector normalization. The operator $(-\Delta_\Gamma)^\gamma$ acts on the variable $\boldsymbol x$. As we mentioned earlier, we have the analytical form of $(-\Delta_\Gamma)^\gamma \nabla_{\boldsymbol x}G(\boldsymbol{x},\boldsymbol{z})$, so that the computational complexity of the integral \eqref{eq:beta} is acceptable. 

Now we give the complete direct sampling method in Algorithm \ref{alg}. First, we solve the  homogeneous problem \eqref{eddy-E0} to obtain the background eddy current field $(\boldsymbol E_0,\boldsymbol H_0)$. With measurement $\boldsymbol{\mathcal M}=\boldsymbol H|_\Gamma$ we now get the magnetic perturbation field $\boldsymbol H^s=\boldsymbol{\mathcal M}-\boldsymbol H_0$ on the measurement surface $\Gamma$. Then for every possible $\boldsymbol z\in \Omega$ we calculate 
\begin{equation*}
    \boldsymbol \beta_{\boldsymbol z}=\left(\int_\Gamma
    \boldsymbol H^s(\boldsymbol x) \times(-\Delta_\Gamma)^\gamma \nabla_{\boldsymbol x}G(\boldsymbol x,\boldsymbol z)\,d\boldsymbol x\right)^\wedge, 
\end{equation*}
and the index function $I:\Omega\to[0,1]$ is defined as
\begin{equation}\label{eq:I}
    I(\boldsymbol z):=\cfrac{J(\boldsymbol z,\boldsymbol\beta_{\boldsymbol z})}{\max_{\boldsymbol{z}\in\Omega}J(\boldsymbol z,\boldsymbol\beta_{\boldsymbol z})}. 
\end{equation}
The conductive region $D$ is located where the index function \eqref{eq:I} attains larger values. 

\begin{algorithm}[!ht]
\caption{Direct Sampling Method for MIT}\label{alg}

\begin{algorithmic}[1]

\REQUIRE $\boldsymbol{\mathcal M}=\boldsymbol H|_\Gamma$, the magnetic field data on $\Gamma$.

\STATE Solve \eqref{eddy-E0} to get $(\boldsymbol E_0,\boldsymbol H_0)$,

\STATE Calculate perturbation field $\boldsymbol H^s=\boldsymbol{\mathcal M}-\boldsymbol H_0$ on data surface $\Gamma$,

\FOR{every possible $\boldsymbol z\in\Omega$}

\STATE Calculate $\boldsymbol \beta_{\boldsymbol z} =\left(\int_\Gamma\boldsymbol H^s(\boldsymbol x) \times(-\Delta_\Gamma)^\gamma \nabla_{\boldsymbol x}G(\boldsymbol x,\boldsymbol z)\,d\boldsymbol x\right)^\wedge$, 
    
\STATE Calculate $J(\boldsymbol z,\boldsymbol{\beta}_{\boldsymbol z})=\frac{|\langle\boldsymbol H^s(\cdot),\nabla_{\boldsymbol x}G(\cdot,\boldsymbol z)\times\boldsymbol \beta_{\boldsymbol z}\rangle_\gamma|}
{|\nabla_{\boldsymbol x}G(\cdot,\boldsymbol z)\times\boldsymbol \beta_{\boldsymbol z}|_\gamma}$,

\ENDFOR

\STATE Do normalization $I(\boldsymbol z)=\frac{J(\boldsymbol z,\boldsymbol{\beta}_{\boldsymbol z})}{\max_{\boldsymbol{z}\in\Omega}J(\boldsymbol z,\boldsymbol{\beta}_{\boldsymbol z})}$.

\end{algorithmic}

\end{algorithm}

We note that the integrals involved in computing $\boldsymbol{\beta}_{\boldsymbol{z}}$ and $J(\boldsymbol{z},\boldsymbol{\beta}_{\boldsymbol{z}})$ are evaluated using a numerical quadrature rule, since in practice the measurement $\boldsymbol{\mathcal M}$ is only available at discrete points on the surface $\Gamma$. We also note that  $\nabla_{\boldsymbol{x}} G(\boldsymbol{x,z})$ and $(-\Delta_\Gamma)^\gamma \nabla_{\boldsymbol{x}} G(\boldsymbol{x,z})$ can be precomputed for each sampling point $\boldsymbol{z}$, which helps to accelerate the imaging algorithm.

When multiple measurements $\{\boldsymbol{\mathcal M}_k\}_{k=1}^N$ are available, the direct sampling algorithm can be applied individually to each measurement $\boldsymbol{\mathcal{M}}_k$ to obtain the corresponding index function $I_k:\Omega\to[0,1]$, $k=1,\cdots,N$. Subsequently, these index functions are combined by computing their root-mean-square average
\begin{equation}
    \tilde I(\boldsymbol{z})=\frac{\sqrt{\frac1N\sum_{k=1}^N I_k^2(\boldsymbol{z})}}{\max_{\boldsymbol{z}\in\Omega}\sqrt{\frac1N\sum_{k=1}^N I_k^2(\boldsymbol{z})}}. \label{multipleI}
\end{equation}

\section{Numerical experiments}\label{sect4}

In this section, we present some numerical examples to demonstrate the efficiency of our direct sampling method in Algorithm \ref{alg} to solve the MIT problem. 

The sampling region is defined as the disk $\Omega=\{|\boldsymbol{x}|\le 1.0\}$, which has a constant magnetic permeability of $\mu=4\pi\times10^{-7}$. The angular frequency is set to $\omega=2\pi\times10^8$. The receivers are located on the spherical surface $\Gamma=\{|\boldsymbol{x}|=1.5\}$. In each example, the sampling domain contains several inhomogeneous inclusions, each with a constant conductivity of $\sigma=1.0$. To generate the observed data, we solve the forward problems \eqref{eddy-E} and \eqref{eddy-E0} using the finite element method, mainly by employing the auxiliary space Maxwell solver on the PHG(Parallel Hierarchical Grid) platform \cite{PHG2009}. 

For each example, we measure $N$ sets of data, all excited by the annular source current in a driving coil.  In our experiments, a total of $N=20$ coils were used to generate the data, with their centers uniformly distributed on a spherical surface of radius $r=1.5$. The coils are positioned at the vertices of a regular dodecahedron, as illustrated in Figure \ref{fig:coil_positions}. 

\begin{figure}[htbp]
\centering
\includegraphics[width=0.5\textwidth]{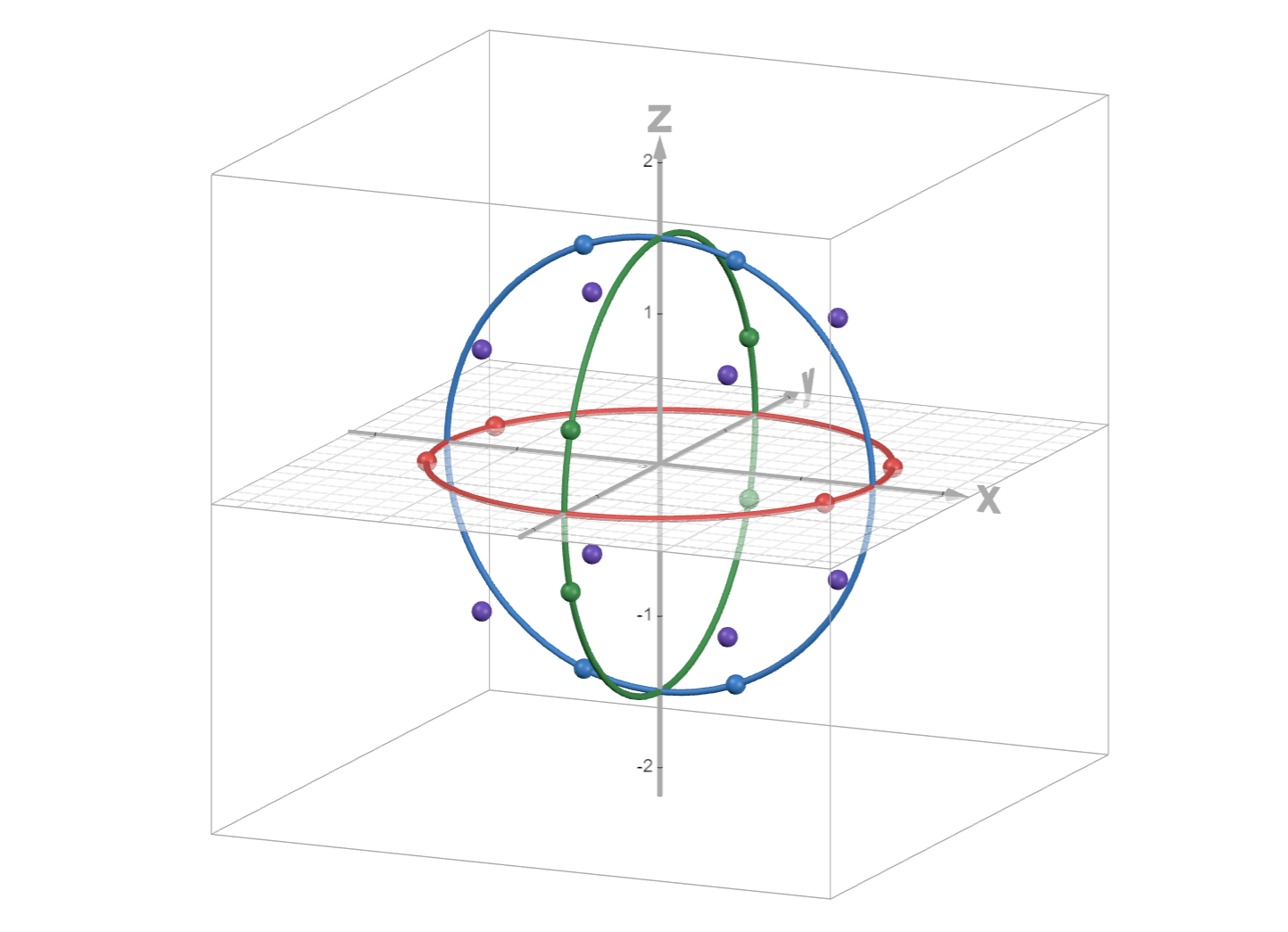}
\caption{Placement of the 20 coils at the vertices of a regular dodecahedron.}
\label{fig:coil_positions}
\end{figure}

These driving coils have identical shapes but are positioned in different orientations. Each driving coil can be regarded as a short and flat annular cylinder, with its central axis passing through the origin. It has an inner radius of $r_1=0.4$, an outer radius of $r_2=0.6$, and a height of $h=0.2$ (see Figure \ref{fig:geo}). Under the above constraints, the spatial position of a given coil is uniquely determined once the coordinates of its center are specified. 

During the $k$th measurement ($k=1,2,...,N$), we individually excite the $k$th coil and collect the corresponding magnetic field data $\boldsymbol{H}_k$ on the surface $\Gamma$. In our numerical experiments, we place $9812$ receivers on $\Gamma$. Therefore, each measurement data $\boldsymbol{\mathcal M}_k=\boldsymbol{H}_k|_{\Gamma}$ contains $9812$ three-dimensional vectors.


To specify the distribution $\boldsymbol{J}$ of the source current in the driving coil, we adopt a local coordinate system with its origin at the center of the coil $O'$. The local $x'$ axis is defined as the direction of $OO'$, and the perpendicular direction defines the $y'O'z'$ plane (see Figure \ref{fig:geo}). In this coordinate system, the source current $\boldsymbol{J}$ can be specified as
\begin{equation*}
    \boldsymbol{J}=\begin{cases}
        (0,-z',y'),&r_1\le\sqrt{y'^2+z'^2}\le r_2\quad\text{and}\quad|x'|\le h/2,\\
        (0,0,0),&\text{otherwise}.
    \end{cases}
\end{equation*}

\begin{figure}[htbp]
    \centering
    \begin{subfigure}{0.52\linewidth}
        \centering
        \includegraphics[width=\linewidth]{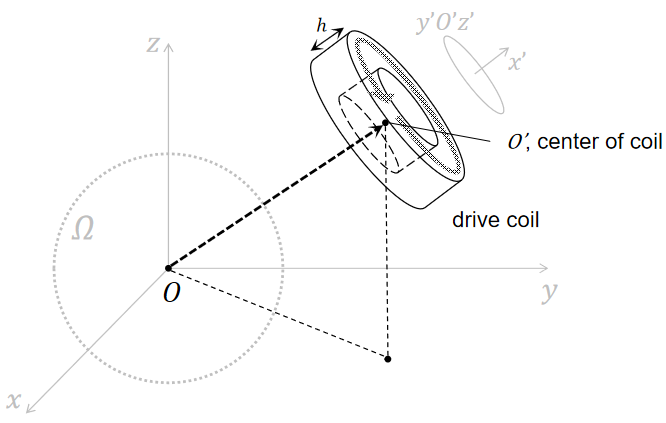}
        \caption{}
    \end{subfigure}
    \centering
    \begin{subfigure}{0.30\linewidth}
        \centering
        \includegraphics[width=0.9\linewidth]{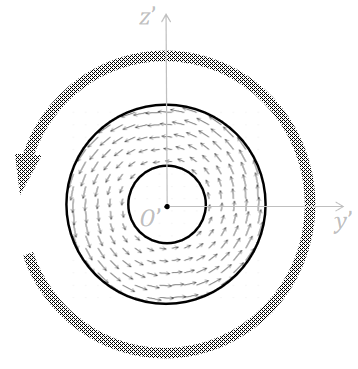}
        \caption{}
    \end{subfigure}
    \caption{Description of the geometry. Left: The driving coil and the local coordinate system $x'y'z'$. Right: Distribution of the current density in the driving coil under local coordinate system.}
    \label{fig:geo}
\end{figure}

Note that $\boldsymbol{J}$ is divergence-free, which satisfies our assumption. Let the source term $\boldsymbol{J}_0$ in the forward problem (2) be the current $\boldsymbol{J}_k$ generated by the $k$th driving coil, and then the solution $(\boldsymbol{E}_k,\boldsymbol{H}_k)$ shall give the corresponding data $\boldsymbol{\mathcal{M}}_k=\boldsymbol{H}_k|_{\Gamma}$, $k=1,\cdots,N$.

In addition, we introduce random noise to the data pointwisely in the form:
\begin{equation*}\label{eq:noise}
    \boldsymbol{\mathcal{M}}_k(\boldsymbol{x})=(\boldsymbol{1}+\epsilon\boldsymbol{\delta})\odot\boldsymbol{H}_k(\boldsymbol{x})=
    \begin{bmatrix}
        (1+\epsilon\delta_1)\cdot H_{k,1}(\boldsymbol{x})\\
        (1+\epsilon\delta_2)\cdot H_{k,2}(\boldsymbol{x})\\
        (1+\epsilon\delta_3)\cdot H_{k,3}(\boldsymbol{x})
    \end{bmatrix},
    \quad \boldsymbol{x}\in\Gamma.
\end{equation*}
Here, $\delta_1,\delta_2,\delta_3$ are complex random variables with their real and imaginary parts independently following a standard normal distribution, and $\epsilon$ is the relative noise level. In the following examples, we shall compare cases with $\epsilon=0$ and $\epsilon=20\%$. Our direct sampling method still performs well under this high noise level. This indicates that our algorithm is very robust in dealing with data noise.

During the experiments, we set $\gamma=4$ since the maximum of point spread functions $K$ will be sharp for large $\gamma$. We also perform post-processing on the reconstruction result \eqref{multipleI}. We find that the fourth power of the index function $\tilde I^4$ has the best presentation effect.

\paragraph{Example 1.} In this example, the conductive region $D$ is composed of two identical cubic inclusions, each having an edge length of $0.2$. These inclusions are located at $(0.40,0.41,0.0)$ and $(-0.40,-0.40,0.0)$ respectively. Figure \ref{fig:ex1} shows the reconstructed index functions $\tilde I$ and $\tilde I^4$ under different noise levels, and the conductor area is indicated in Figure \ref{fig:ex1} by the white box. We can see that the inclusions are well separated, and their locations are recovered pretty satisfactorily. This indicates that our direct sampling method has good separation capabilities when imaging multiple objects.

\begin{figure}[ht]
\centering
    \begin{subfigure}{0.32\linewidth}
        \centering
        \includegraphics[width=0.95\linewidth]{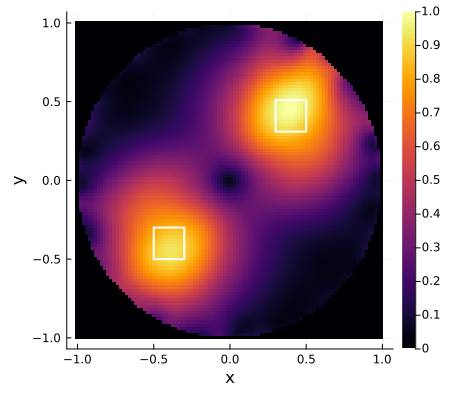}
    \end{subfigure}
    \centering
    \begin{subfigure}{0.32\linewidth}
        \centering
        \includegraphics[width=0.95\linewidth]{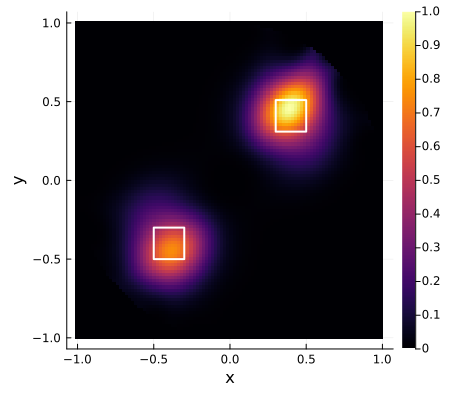}
    \end{subfigure}
    \centering
    \begin{subfigure}{0.32\linewidth}
        \centering
        \includegraphics[width=0.95\linewidth]{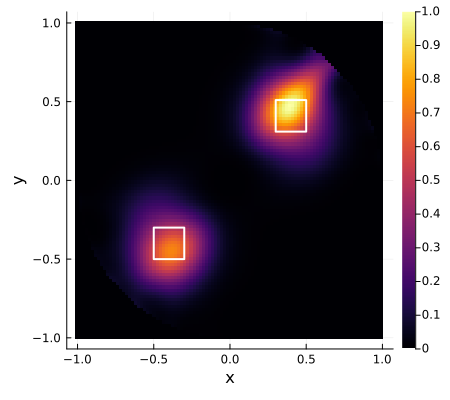}
    \end{subfigure}
\caption{Reconstructed medium image of example 1. Index function $\tilde I$ and $\tilde I^4$ on cross-section $\{x^2+y^2\le 1,\,z=0\}$. Left: $\tilde I$ with $\epsilon = 0$. Middle: $\tilde I^4$ with $\epsilon=0$. Right: $\tilde I^4$ with $\epsilon= 20\%$.  }
\label{fig:ex1}
\end{figure}

\paragraph{Example 2.} This example tests a medium with two cubic inclusions located on the same column, which are at the positions $(0.40,0.41,0.0)$ and $(0.40,-0.40,0.0)$, respectively. The size of the inclusions is the same as in example 1. The reconstructed images are shown in Figure \ref{fig:ex2}. From the figure, we observe that the scatterers are closer to each other and their reconstruction areas tend to merge. However, the locations of both inclusions are still recognizable. When the measurement data contains an $\epsilon=20\%$ relative noise, there is no significant change in our imaging results, which exhibits the good stability of our direct sampling method.

\begin{figure}[ht]
\centering
    \begin{subfigure}{0.32\linewidth}
        \centering
        \includegraphics[width=0.95\linewidth]{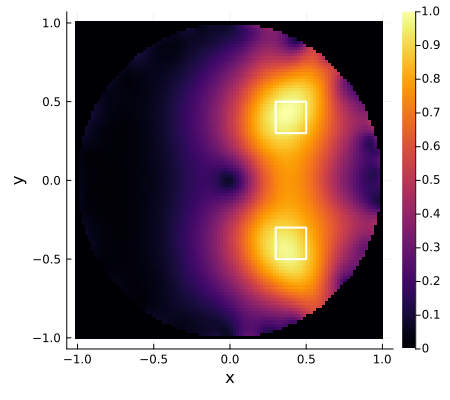}
    \end{subfigure}
    \centering
    \begin{subfigure}{0.32\linewidth}
        \centering
        \includegraphics[width=0.95\linewidth]{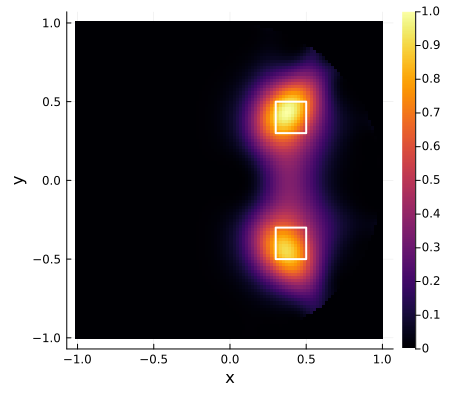}
    \end{subfigure}
    \centering
    \begin{subfigure}{0.32\linewidth}
        \centering
        \includegraphics[width=0.95\linewidth]{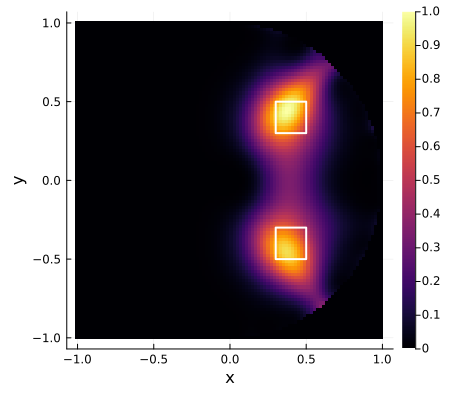}
    \end{subfigure}
\caption{Reconstructed medium image of example 1. Index function $\tilde I$ and $\tilde I^4$ on cross-section $\{x^2+y^2\le 1,\,z=0\}$. Left: $\tilde I$ with $\epsilon = 0$. Middle: $\tilde I^4$ with $\epsilon=0$. Right: $\tilde I^4$ with $\epsilon= 20\%$.  }
\label{fig:ex2}
\end{figure}

\paragraph{Example 3.} In this example, an L-shaped conductor is located on plane $\{z=0\}$ and has a thickness $0.2$. The specific position of the conductor is marked with the white frame lines in Figure \ref{fig:ex3}. The reconstruction results show that the imaging function is more pronounced at the L-shaped corner; overall, however, the index function correctly reflects the shape of the conductive area. The noise in the measurement data did not compromise the validity of the imaging results. Our algorithm remains effective and robust when imaging a single large conductor.

\begin{figure}[ht]
\centering
    \begin{subfigure}{0.38\linewidth}
        \centering
        \includegraphics[width=0.95\linewidth]{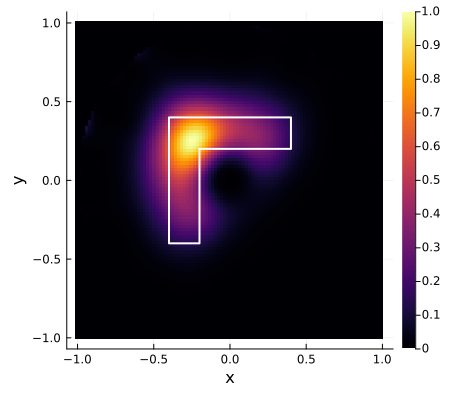}
    \end{subfigure}
    \centering
    \begin{subfigure}{0.38\linewidth}
        \centering
        \includegraphics[width=0.95\linewidth]{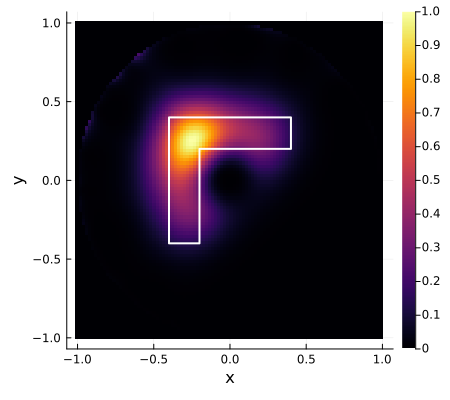}
    \end{subfigure}
\caption{Reconstructed medium image of example 3. Index function $\tilde I^4$ on cross-section $\{x^2+y^2\le1,\,z=0\}$. Left: $\tilde I^4$ with $\epsilon=0$. Right: $\tilde I^4$ with $\epsilon=20\%$. }
\label{fig:ex3}
\end{figure}

\paragraph{Example 4.} Four cubic conductors with an edge length of $0.12$ are set up in this example. Their centers are located at $(-0.3,-0.3,0.3)$, $(0.3,0.3,0.3)$, $(-0.3, 0.3, -0.3)$, $(0.3, -0.3, -0.3)$, respectively. In Figure $\ref{fig:ex4}$, we show the graph of index function $\tilde I^4$ on several cross-sections, including $\{x=-0.3\}$, $\{y=\pm0.3\}$ and $\{z=-0.3\}$. We observe that the index function $\tilde I^4$ could accurately reflect the position of the conductive area, even when the measurement data is subjected to a $20\%$ relative noise level. This demonstrates the effectiveness of our algorithm for three-dimensionally distributed conductors.

\begin{figure}[ht]
\centering
    \begin{subfigure}{0.32\linewidth}
        \centering
        \includegraphics[width=0.97\linewidth]{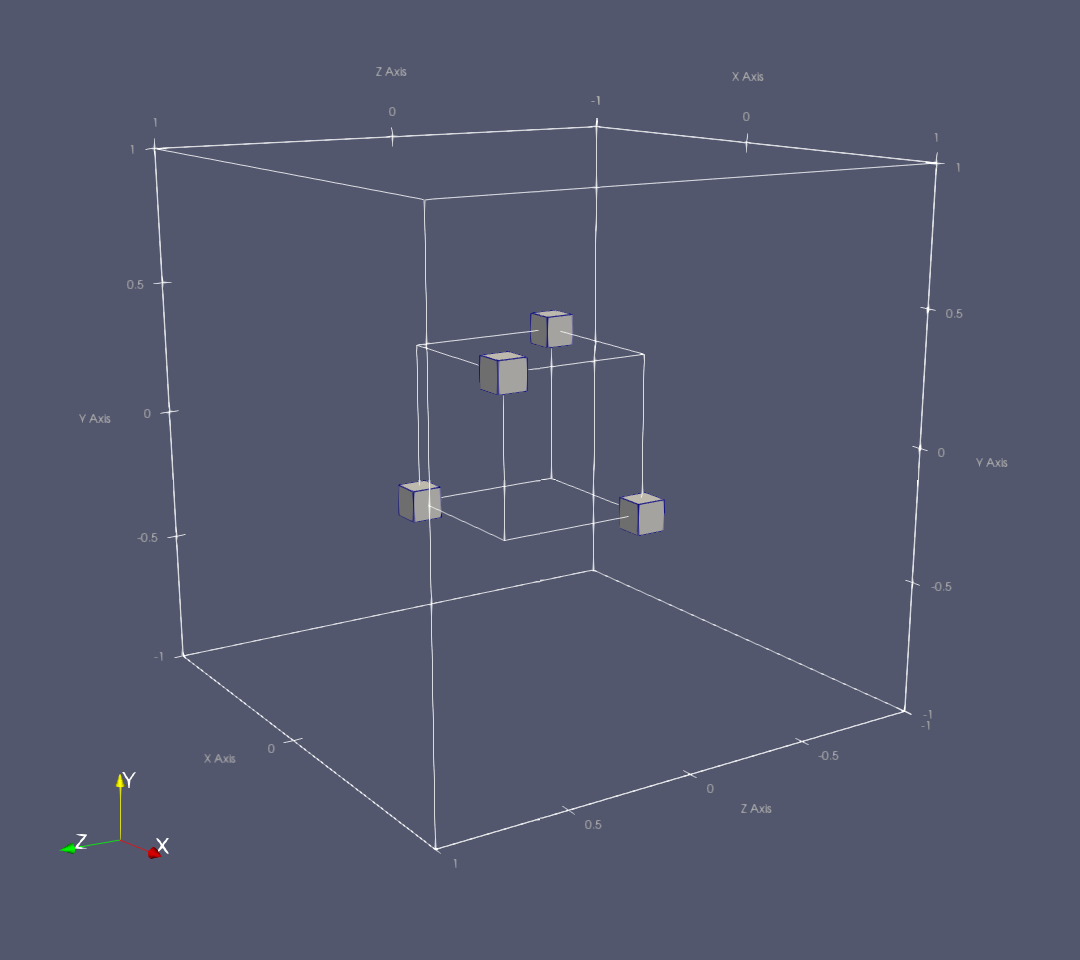}
    \end{subfigure}
    \centering
    \begin{subfigure}{0.32\linewidth}
        \centering
        \includegraphics[width=0.97\linewidth]{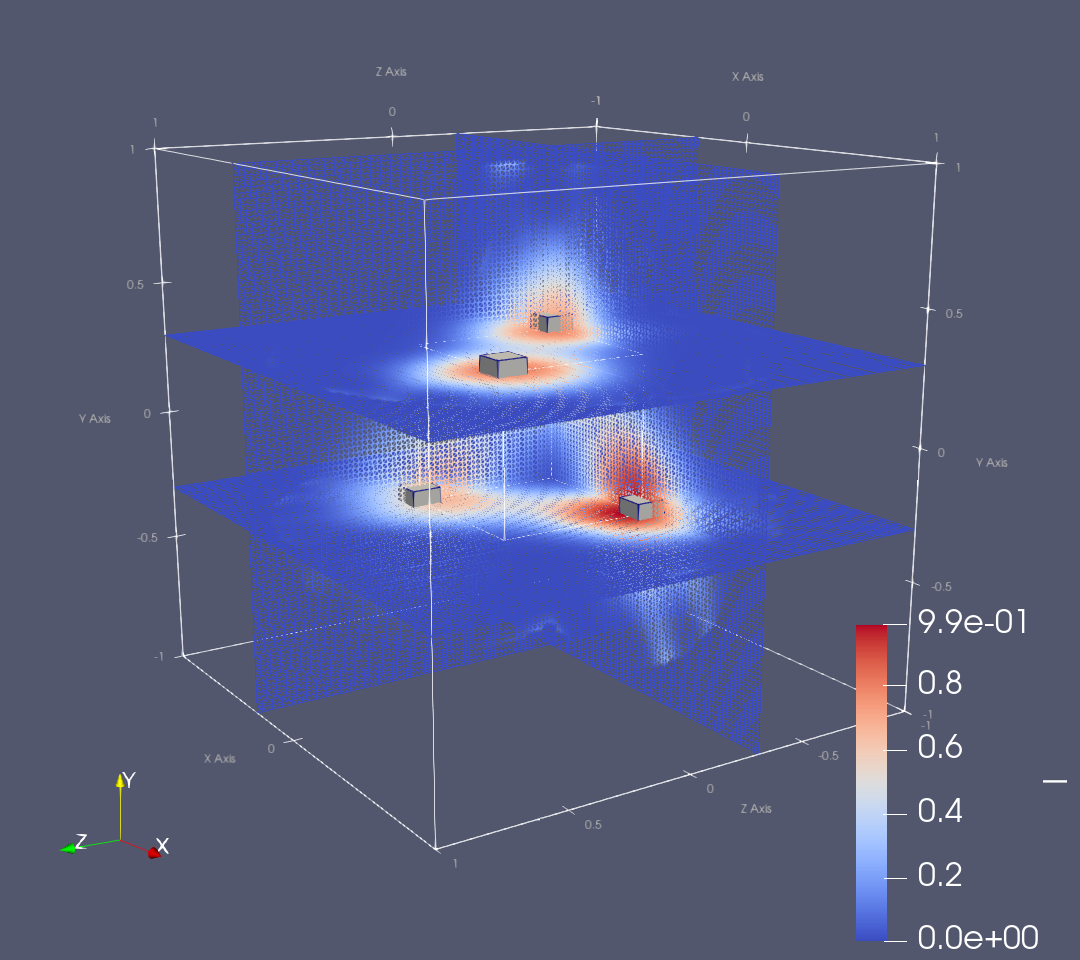}
    \end{subfigure}
    \centering
    \begin{subfigure}{0.32\linewidth}
        \centering
        \includegraphics[width=0.97\linewidth]{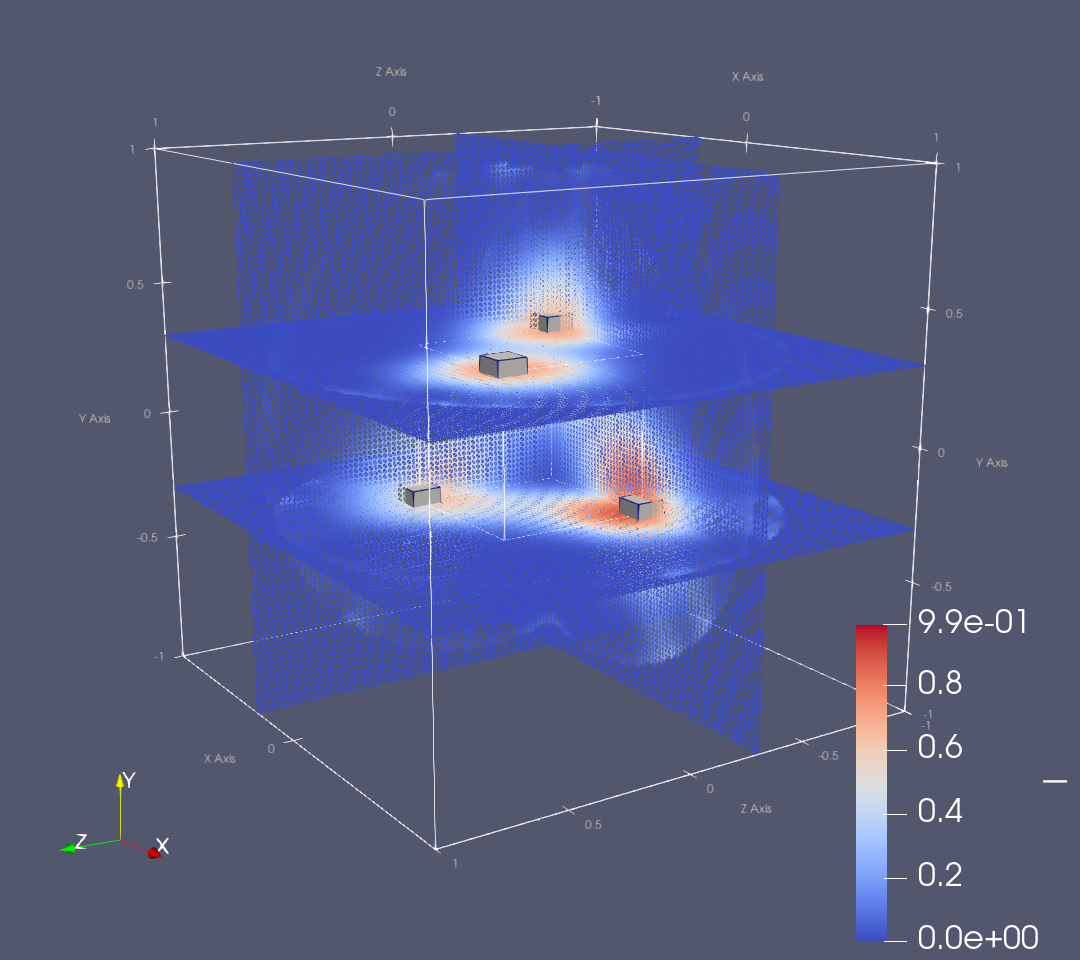}
    \end{subfigure}
\caption{Reconstructed medium image of example 4. Index function $\tilde I^4$ on several cross-sections. Left: Geometric position of conductors. Middle: $\tilde I^4$ with $\epsilon=0$. Right: $\tilde I^4$ with $\epsilon= 20\%$.  }
\label{fig:ex4}
\end{figure}

The above examples show that our direct sampling method for the MIT problem is capable of successfully identifying the conductive regions, and remains effective even when the measured data is contaminated with significant noise. Considering the severe ill-posedness of the MIT problem and the low computational cost of the reconstruction process, the numerical reconstruction results obtained by this newly proposed direct sampling method are quite satisfactory.

\section{Conclusion}\label{sect5}

This paper proposes a direct sampling method for solving magnetic induction tomography problems. The proposed imaging algorithm is characterized by its simplicity and non-iterative nature, where an index function is generated solely through the computation of vector inner products. In particular, the differentiation required during the computation utilizes analytical formulas rather than numerical differentiation, thereby significantly enhancing the computational speed and minimizing the numerical errors. In terms of theoretical analysis, we rigorously establish the decay property of the point spread functions $K$ near the boundaries, and further derive the explicit expressions for $K$ under specific conditions to visually demonstrate its decay behavior. This decay property ensures that the index function yields larger values inside the conductor and smaller values outside, thereby providing a mathematical justification for the imaging method. The efficacy of the algorithm is demonstrated through several numerical experiments. Our algorithm remains effective in reconstructing the conductive inclusions even with a noise level of $20\%$, demonstrating the robustness of the proposed algorithm. In summary, the method presented in this study provides a computationally efficient and theoretically sound approach to the magnetic induction tomography problem, indicating significant potential for practical applications in the field.

\bibliographystyle{abbrv}
\bibliography{references}
\end{document}